\documentclass[11pt,letterpaper]{amsart}
\usepackage{amsmath}
\usepackage{amssymb}
\usepackage{amsthm}
\usepackage{hyperref}
\usepackage[noabbrev,capitalize]{cleveref}
\usepackage{mathrsfs}
\usepackage{mathtools}
\usepackage{slashed}
\usepackage{tikz-cd}
\usepackage[shortlabels]{enumitem}

\setenumerate{label=(\roman*)}






\def\sideremark#1{\ifvmode\leavevmode\fi\vadjust{\vbox to0pt{\vss
 \hbox to 0pt{\hskip\hsize\hskip1em
 \vbox{\hsize3cm\tiny\raggedright\pretolerance10000
 \noindent #1\hfill}\hss}\vbox to8pt{\vfil}\vss}}}


\newtheorem{theorem}{Theorem}[section]
\newtheorem{proposition}[theorem]{Proposition}
\newtheorem{lemma}[theorem]{Lemma}
\newtheorem{corollary}[theorem]{Corollary}

\theoremstyle{definition}

\theoremstyle{remark}
\newtheorem{remark}[theorem]{Remark}

\numberwithin{equation}{section}

\makeatletter
\@namedef{subjclassname@2020}{\textup{2020} Mathematics Subject Classification}
\makeatother

\begin{document}

\title[Do Carmo's problem in $\mathbb{R}^6$]{Do Carmo's problem for CMC hypersurfaces in $\mathbb{R}^6$}

\author{Jingche Chen}
\address{Department of Mathematical Sciences, Tsinghua University, 100084, Beijing, China}
\email{cjc23@mails.tsinghua.edu.cn}

\author{Han Hong}
\address{Department of Mathematics and statistics \\ Beijing Jiaotong University \\ Beijing \\ China, 100044}
\email{hanhong@bjtu.edu.cn}

\author{Haizhong Li}
\address{Department of Mathematical Sciences, Tsinghua University, 100084, Beijing, China}
\email{lihz@tsinghua.edu.cn}

\begin{abstract}
In this paper, we prove that complete noncompact constant mean curvature hypersurfaces in $\mathbb{R}^6$ with finite index must be minimal. This provides a positive answer to do Carmo's question in dimension $6$. The proof strategy is also applicable to $\mathbb{R}^4$ and $\mathbb{R}^5$, thereby providing alternative proofs for those previously resolved cases.
\end{abstract}

\maketitle

\section{introduction}
The classical Bernstein problem asks whether a complete minimal graph over Euclidean space $\mathbb{R}^{n+1}$ must necessarily be a hyperplane. Through a series of groundbreaking works by W. Fleming \cite{Flembing62}, De Giorgi \cite{De65}, F. Almgren \cite{Almgren66}, J. Simons \cite{Simons68}, and E. Bombieri, De Giorgi, and E. Giusti \cite{BDG69}, this problem has been fully resolved. Meanwhile, S. Chern \cite{cherntheorem} showed that complete constant mean curvature (CMC) graphs must be minimal. Indeed, if the mean curvature is nonzero, one can use the maximum principle to derive a contradiction by translating a sphere with the same mean curvature from above or below until it touches the graph (see Nelli \cite{Nelli-survey}).

The community is now interested in a stronger problem, known as the \textit{stable Bernstein problem}, which aims to classify complete noncompact, two-sided, stable minimal hypersurfaces. R. Schoen, L. Simon and S. Yau \cite{schoensimonyau1975} made significant progress by proving that such hypersurfaces in $\mathbb{R}^{n+1}$ for $n\leq 5$ must be hyperplanes under the assumption of Euclidean volume growth. Since then, efforts have been devoted to removing the volume growth assumption. In the early 1980s, D. Fischer-Colbrie and R. Schoen \cite{Fischer-Colbrie-Schoen-The-structure-of-complete-stable}, M. do Carmo and C.K. Peng \cite{doCarmo-Peng}, and Pogorelov \cite{Pogorelov-stable} independently proved that complete two-sided stable minimal surfaces in $\mathbb{R}^3$ must be planes.
Exciting progress has been made in recent years. In a series of works by O. Chodosh and C. Li \cite{chodoshliR4,chodoshliR4anisotropic}, O. Chodosh, C. Li, P. Minter and D. Stryker\cite{chodosh-li-minter-stryker-R5}, L. Mazet \cite{mazet}, this problem has been resolved up to dimension $n=5$. It is also worth noting that G. Catino, P. Mastrolia and A. Roncoroni \cite{catino} provided an alternative proof in dimension $n=3$. The only remaining unsolved case is stable minimal hypersurfaces in $\mathbb{R}^7$, although R. Schoen and L. Simon \cite{SchoenSimon1981Regularity}, C. Bellettini \cite{bellettini} have establised some partial results in this dimension.

We can also define stability for CMC hypersurfaces from a variational perspective, similar to minimal hypersurfaces: they are critical points of the area functional under compactly supported variations that preserve the enclosed volume. Let $M\rightarrow (X^{n+1},g)$ be an immersed hypersurface with constant mean curvature. It is called \textit{weakly stable} if
\[\int_M |\nabla\varphi|^2\geq \int_M (|A_M|^2+\operatorname{Ric}_X(\nu,\nu))\varphi^2\]
	for any $\varphi\in C_0^\infty(M)$ satisfying
	\[\int_M \varphi=0.\]
The corresponding Jacobi operator  is given by $$J=\Delta_M+|A_M|^2+\operatorname{Ric}_X(\nu,\nu).$$ 
We say that $M$ has \textit{finite index} if $\sup_{i}\operatorname{Ind}(\Omega_i)<\infty$, where $\{\Omega_i\}$ is an exhaustion of $M$ and $\operatorname{Ind}(\Omega_i)$ denotes the number of negative Dirichlet eigenvalues of $J$ on $\Omega_i$. 

In 1989, do Carmo \cite{docarmoquestionlecturenote} posed the following question: \textit{Is a complete noncompact, weakly stable (even finite index), constant mean curvature hypersurface of $\mathbb{R}^{n+1}$, $n\geq 3$, necessarily minimal?}   The question is natural. Because before it was posed, da Silveira \cite{daSilveira-Stability-of-complete}, F. L\'{o}pez and A. Ros \cite{lopezros} have confirmed it for $n=2$. Moreover, It is a generalization of Chern's result since complete CMC graph must be stable. H. Alencar and M. do Carmo \cite{alencardocarmopolynomialgrowth} showed that a complete CMC hypersurface in $\mathbb{R}^{n+1}$ with finite index and  polynomial volume growth must, in fact, be minimal. Much later, the answer was proven to be positive for $n=3,4$ by X. Cheng \cite{chengxufiniteindex} and independently by M. Elbert, B. Nelli and H. Rosenberg\cite{elbertnellirosenbergpams},  both using a generalized Bonnet-Meyer's argument. Additionally, under the assumption that the volume entropy of $M$ is zero, S. Ilias, B. Nelli, and M. Soret \cite{zeroentropy} provided a positive answer to Do Carmo's question in all dimensions. With conditions on the growth of total curvature, there are some results for stable CMC hypersurfaces in $\mathbb{R}^6$ (see \cite{alencar-docarmo-totalcurvature,docarmo-zhou-totalcurvature,ilias-nelli-soret-totalcurvature}). However, do Carmo's question remains open in higher dimensions without additional assumptions. Notably, resolving do Carmo's question is equivalent to proving that CMC hypersurfaces with nonzero mean curvature and finite index must be compact.

The goal of this paper is to answer do Carmo's question positively in dimension $n=5$.
\begin{theorem}\label{maintheorem}
    Complete noncompact constant mean curvature hypersurfaces in $\mathbb{R}^6$ with finite index must be minimal.
\end{theorem}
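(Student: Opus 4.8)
The plan is to argue by contradiction. Suppose $M=M^{5}\hookrightarrow\bR^{6}$ is complete, noncompact, has constant mean curvature $H$, and has finite index; we will deduce $H=0$. A hypersurface with nonvanishing mean curvature vector is automatically two‑sided, so after a homothety of $\bR^{6}$ and a choice of unit normal we may assume $H>0$, and the goal becomes to show $M$ is compact (as noted in the introduction, this is equivalent to do Carmo's question). Since $\bR^{6}$ is flat the Jacobi operator is $J=\Delta_{M}+|A|^{2}$, and by the Fischer--Colbrie argument finiteness of the index of $J$ produces a compact set $K\subset M$ such that the end $\Sigma:=M\setminus K$ satisfies the stability inequality $\int_{\Sigma}|\nabla\varphi|^{2}\ge\int_{\Sigma}|A|^{2}\varphi^{2}$ for all $\varphi\in C_{c}^{\infty}(\Sigma)$ (this also covers the weakly stable case of do Carmo's original question, since weak, volume‑preserving stability forces the unconstrained index to be at most one, the constraint $\int\varphi=0$ being of codimension one). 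Passing to a noncompact component, $\Sigma$ may be taken connected with compact boundary, and we record two structural facts: (a) $\Sigma$ satisfies exactly the stability inequality of a \emph{minimal} hypersurface of $\bR^{6}$; and (b) by the Gauss equation $\Ric_{M}\ge-c_{5}|A|^{2}$ on $\Sigma$, while $|A|^{2}\ge 5H^{2}>0$ uniformly, since $H\ne 0$.

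The heart of the proof is to show that $\Sigma$ has polynomial volume growth, $\Vol(B_{r}\cap\Sigma)\le Cr^{N}$. Facts (a) and (b) are precisely the intrinsic input — minimal‑type stability together with a Gauss‑equation Ricci lower bound — on which the recent resolution of the stable Bernstein problem in $\bR^{n+1}$ for $n\le5$ rests (Chodosh--Li, Chodosh--Li--Minter--Stryker, Mazet), through warped‑product and $\mu$‑bubble constructions and the attendant second‑order estimates; the dimension $n=5$ is admissible for the same reason as in the minimal problem, and $n=6$, i.e.\ $\bR^{7}$, is not. One runs this machinery on $\Sigma$, using in addition the CMC Simons inequality for the trace‑free second fundamental form $\phi:=A-Hg$ (a Codazzi tensor, as $H$ is constant), schematically $|\phi|\Delta|\phi|\ge\tfrac{2}{5}\,|\nabla|\phi||^{2}-|\phi|^{4}-c_{5}H|\phi|^{3}-c_{5}H^{2}|\phi|^{2}$, with the Kato factor $\tfrac25$ supplying the usual room. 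Inserting this into the stability inequality with test functions $|\phi|^{1+q}f$, the quartic terms cancel for $q$ in a nonempty range precisely when $n\le5$, while the $H$‑dependent terms carry strictly fewer powers of $|\phi|$ and, $H$ being a fixed constant, are absorbed (after possibly enlarging $K$); the output is the polynomial volume bound together with a pointwise bound on $|A|$ along $\Sigma$. Restoring the compact piece $K$, the full hypersurface $M$ then has polynomial volume growth, so by the theorem of Alencar--do Carmo it is minimal, that is $H=0$, contradicting $H>0$. The identical scheme runs in $\bR^{4}$ and $\bR^{5}$, where this step is only easier, recovering the theorems of Cheng and of Elbert--Nelli--Rosenberg.

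All the difficulty is concentrated in the volume‑growth step: reproducing the delicate, dimension‑sensitive $\mu$‑bubble and curvature‑estimate induction behind the $n=5$ stable Bernstein theorem while carrying the nonzero constant $H$ through it. One must verify that \emph{every} $H$‑dependent term — in the Simons inequality, in the prescribed‑mean‑curvature functionals whose minimizers serve as $\mu$‑bubbles, and at each iteration step — is genuinely of lower order and can be absorbed uniformly once one is far enough out in the end; and, crucially, that these absorptions do not consume the uniform positivity $|A|^{2}\ge 5H^{2}$, which is what distinguishes the CMC problem from the minimal one and is, via Alencar--do Carmo, ultimately responsible for the contradiction. Checking that the admissible range of $q$ and the Kato bookkeeping remain favourable at $n=5$ in the presence of the $H$‑terms is the sharp point.
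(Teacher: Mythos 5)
Your overall skeleton (finite index $\Rightarrow$ stability outside a compact set, then polynomial volume growth, then conclude via the spectral gap $|A|^2\ge nH^2$ / Alencar--do Carmo) matches the paper's architecture, and the final reduction is sound. But the heart of your argument --- Euclidean volume growth for the finite-index CMC end in $\mathbb{R}^6$ --- is asserted by analogy rather than proved, and the mechanism you sketch would not deliver it. The Schoen--Simon--Yau device of testing Simons' inequality with $|\phi|^{1+q}f$ does not \emph{produce} volume growth; in the classical minimal setting it needs Euclidean volume growth as a hypothesis, and in the recent $\mathbb{R}^5$/$\mathbb{R}^6$ stable Bernstein works the volume bound comes instead from a warped/conformal-metric $\mu$-bubble argument based on a spectral $\operatorname{biRic}$ (or $\alpha$-$\operatorname{biRic}$) estimate. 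Adapting that estimate to nonzero $H$ is exactly where the work lies, and your claim that the $H$-dependent terms are ``lower order and absorbed far out on the end'' is not valid as stated: in the Gulliver--Lawson metric $\tilde g=r^{-2}g$ the mean curvature enters through terms of the form $\langle x,\nu\rangle H$ and $\langle x,\nu\rangle A_{ii}$, and $|x|=r$ is unbounded, so these are not small perturbations. The paper handles them by rescaling so that only the bounded quantity $\langle x/r,\nu\rangle$ appears against $rA_{ii}$-type terms, packaging everything into a quadratic form whose positive definiteness must be checked with explicit constants $a,\alpha,\delta,\beta$ in each dimension $3\le n\le 5$ (Propositions 3.1 and 3.2); this numerical verification at $n=5$ is the sharp point you flag but do not carry out.

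Beyond that, your proposal omits supporting results the volume-growth step genuinely needs in the finite-index CMC setting: a Sobolev inequality on $M$ (obtained in the paper from Brendle's Michael--Simon inequality, with the $H$-term absorbed using stability outside a compact set, and extended across the compact piece via Carron's proposition), finiteness of the number of ends and finite-dimensionality of $H^1_c(M)$ (via Li--Wang $L^2$ harmonic 1-form theory together with the Gauss-equation Ricci estimate and improved Kato inequality, valid only for $n\le 6$), which are what bound the number of $\mu$-bubble boundary components, and the Antonelli--Xu/Chodosh--Li area and diameter bounds that convert the spectral Ricci estimate on the bubbles into the bound $|B_\rho|\le C\rho^n$. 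Without these ingredients, and without a concrete replacement for the $\operatorname{biRic}$ computation with the $H$-terms, your proof is a program rather than a proof: it identifies the right target (volume growth plus the uniform positivity $|A|^2\ge 5H^2$) but leaves the dimension-sensitive core unverified.
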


The assumption of finite index is necessary because there exist Delaunay surfaces, which have infinite index (due to their periodic nature) and nonzero mean curvature. Under the assumption of weak stability, the second author \cite[Theorem 6.1]{hong24} has shown that complete CMC hypersurfaces in $\mathbb{R}^6$ with nonzero mean curvature are compact. The case of finite index differs from the stable case because the hypersurface is only stable outside a compact subset. This can lead to significant challenges; for instance, obtaining a global Schoen-Yau type inequality on a CMC hypersurface with nonzero index is often difficult.

 As previously mentioned, the earlier method relies on a generalized Bonnet-Myers argument, which inherently imposes a dimension restriction. Our approach to the proof, however, differs from this and focuses on establishing the volume estimate of the geodesic ball of the hypersurface. Inspired by recent advancements in resolving stable Bernstein problems for minimal hypersurfaces, we employ $\mu$-bubbles, Sobolev inequality and volume comparison theorem to derive the volume estimate.
 
 To achieve this, let's assume that CMC hypersufaces under consideration have nonzero mean curvature. We first show that a Sobolev-type inequality holds for CMC hypersurfaces with finite index, and that the number of ends is finite (Theorem \ref{finiteend}). The Sobolev type inequality naturally holds outside a compact set but can be extended to the whole hypersurface. We then adopt the strategy outlined in \cite{chodoshliR4anisotropic, chodosh-li-stryker, mazet}, utilizing the $\mu$-bubble technique, but adapt it to CMC hypersurfaces with nonzero mean curvature. This allows us to show that CMC hypersurfaces with finite index have finite volume, i.e.,
 \[|B^M_R(p)|\leq C.\]
One of difficulties is that we lack of isoperimetric inequality for CMC hypersurface. Thus we can not directly bound the volume of a ball by the area of its boundary that consists of $\mu$-bubbles. However, the Sobolev-type inequality ensures that the volume of a ball is bounded by the volume of a compact region outside the ball. With this, we can use Bishop-Gromov volume comparison theorem of hypersurface to control the volume of a slice with fixed width in the ends.

 Finally, the finite volume is a contradiction since the hypersurface is noncompact CMC.  In fact, we also prove the result of Theorem \ref{maintheorem} in $\mathbb{R}^4$ and $\mathbb{R}^5$, thus providing a new idea compared to prior approaches. A similar idea was applied in \cite{hong24} for classifying CMC hypersurface with finite index in $\mathbb{H}^4.$

One particularly interesting and direct corollary of Theorem \ref{maintheorem}, which is worth noting, is the following maximum principle at infinity.
 \begin{corollary}
   A properly embedded complete CMC hypersurface in $\mathbb{R}^6$
  cannot lie on the mean convex side of another properly embedded complete CMC hypersurface with the same nonzero mean curvature.
 \end{corollary}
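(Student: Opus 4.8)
The plan is to argue by contradiction, the essential inputs being the strong maximum principle, Alexandrov's theorem, and Theorem~\ref{maintheorem} (equivalently, its reformulation that a complete constant--mean--curvature hypersurface in $\mathbb{R}^6$ with nonzero mean curvature and finite index is compact). Write $M_1$ for the hypersurface lying on the mean convex side and $M_2$ for the other one, both complete and properly embedded with common mean curvature $H>0$, and let $\Omega_2$ be the open mean convex side of $M_2$, i.e.\ the component of $\mathbb{R}^6\setminus M_2$ into which the mean curvature vector of $M_2$ points. Suppose $M_1\subset\overline{\Omega_2}$ and $M_1\ne M_2$.

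\emph{Step 1: reduce to $M_1\cap M_2=\emptyset$.} If $x\in M_1\cap M_2$, then near $x$ the two surfaces are tangent, $M_1$ lies on the $\overline{\Omega_2}$--side of $M_2$, and both have mean curvature $H$ with respect to the common unit normal pointing into $\Omega_2$. Writing both as graphs over the common tangent plane, the strong maximum principle (Hopf lemma) for the quasilinear uniformly elliptic constant--mean--curvature equation forces them to coincide near $x$; since constant--mean--curvature hypersurfaces are real--analytic and $M_2$ is connected and complete, this gives $M_1=M_2$, a contradiction. Hence $M_1\subset\Omega_2$. Moreover $M_1$ bounds the component $B$ of $\mathbb{R}^6\setminus M_1$ not containing $M_2$, and since $B$ is connected, adjacent to $M_1\subset\Omega_2$, and cannot cross $M_2$, we have $B\subset\Omega_2$.

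\emph{Step 2 (the crux): $M_1$ is compact.} This is where the genuine ``maximum principle at infinity'' difficulty lies, and I expect it to be the main obstacle. The idea is to use $M_2$ as a fixed barrier on the mean convex side and run a $\mu$--bubble / constrained area--minimisation in the collar between $M_1$ and $M_2$ --- in the spirit of the $\mu$--bubble arguments used in the proof of Theorem~\ref{maintheorem} and in \cite{chodoshliR4anisotropic,chodosh-li-minter-stryker-R5,mazet}, with the favourable orientation of the mean curvature vector of $M_2$ preventing the minimiser from crossing $M_2$. This should produce a complete, weakly stable constant--mean--curvature hypersurface $\Sigma$ of mean curvature $H$, contained in $\overline{\Omega_2}$, that agrees with $M_1$ outside a compact set (the ambient dimension $6$ being at most $7$, $\Sigma$ is smooth). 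By \cite[Theorem~6.1]{hong24}, a complete weakly stable constant--mean--curvature hypersurface in $\mathbb{R}^6$ with $H\ne0$ is compact; hence $\Sigma$, and therefore $M_1$, is compact. (If instead $\Sigma$ is forced against $M_2$, the strong maximum principle makes a component of $\Sigma$ equal to $M_2$, so $M_2$ is weakly stable, hence compact, hence a round sphere, so $\Omega_2$ is a bounded ball containing $M_1$ and $M_1$ is again compact.) The technical point to settle is the confinement of minimising sequences, which in the flat metric requires either a conformal change as in the proof of Theorem~\ref{maintheorem} or a careful use of the two barriers; alternatively, it suffices to show directly that $M_1$ has finite index and then invoke Theorem~\ref{maintheorem}.

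\emph{Step 3: conclude.} With $M_1$ compact and properly embedded, Alexandrov's theorem gives that $M_1$ is a round sphere $S$, whose enclosed open ball is exactly $B\subset\Omega_2$. Now slide $S$ toward $M_2$: pick $p\in S$ realising $d:=\mathrm{dist}(S,M_2)>0$ with nearest point $q\in M_2$, set $v_s:=s(q-p)/|q-p|$ for $s\in[0,d]$, and let $s^*\in(0,d]$ be the first value with $(S+v_{s^*})\cap M_2\ne\emptyset$ (such $s^*$ exists since $p+v_d=q\in M_2$ while $S+v_0\subset\Omega_2$). For $s<s^*$ the connected set $S+v_s$ is disjoint from $M_2$ and hence stays in $\Omega_2$, so $S+v_{s^*}\subset\overline{\Omega_2}$ touches $M_2$ at some point $q^*$, tangentially, with both surfaces of mean curvature $H$ and $S+v_{s^*}$ on the $\overline{\Omega_2}$--side of $M_2$. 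As in Step~1, the strong maximum principle and real--analyticity force $M_2$ to coincide with the round sphere $S+v_{s^*}$, a translate of $S$. But then $\Omega_2$ is the open ball bounded by $M_2$, congruent to the ball enclosed by $S$, so $M_1=S\subset\Omega_2$ is impossible --- a contradiction, completing the proof.
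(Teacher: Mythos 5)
Your Steps 1 and 3 are essentially fine (modulo keeping track of the orientation of the mean curvature vector of $M_1$ at a contact point), and your overall plan --- use $M_2$ as a barrier to manufacture a stable CMC hypersurface and then invoke a classification theorem --- is the same as the paper's. But Step 2, which you yourself flag as the crux, contains a genuine gap, and the specific mechanism you propose does not work as stated. A hypersurface $\Sigma$ that agrees with $M_1$ outside a compact set cannot be expected to be \emph{globally} weakly stable: stability is a global property, and the corollary gives you no stability or index information about $M_1$ whatsoever, so outside the compact set $\Sigma=M_1$ contributes arbitrary (possibly destabilizing) geometry. Hence the application of \cite[Theorem 6.1]{hong24} to $\Sigma$ is unjustified, and the compactness of $M_1$ does not follow. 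The fallback you mention --- ``show directly that $M_1$ has finite index and invoke Theorem~\ref{maintheorem}'' --- is not available either: no hypothesis of the corollary bounds the index of $M_1$, and establishing finite index for an arbitrary properly embedded CMC hypersurface trapped on the mean convex side of another is essentially as hard as the corollary itself. Finally, the confinement of minimizing sequences, which you defer, is precisely the analytic content of the step, so as written the proof of the noncompact case is not complete.

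For comparison, the paper's argument bypasses your Step 2 entirely: if one of the two hypersurfaces is compact, the maximum principle (in the spirit of your Steps 1 and 3) already gives the contradiction; if both are noncompact, one minimizes the CMC (area minus $H$ times enclosed volume) functional in the region between $M_1$ and $M_2$ with compact prescribed boundaries on $M_1$, using $M_2$ as a barrier, and then lets the boundaries tend to infinity. The limit is a third \emph{complete noncompact stable} CMC hypersurface with the same nonzero mean curvature --- stable because it is a limit of minimizers, and it neither needs to nor will agree with $M_1$ outside a compact set --- which contradicts Theorem~\ref{maintheorem} directly, with no Alexandrov theorem or sphere sliding required in that case. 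If you want to repair your write-up, replace Step 2 by this exhaustion-and-limit construction and derive the contradiction from Theorem~\ref{maintheorem} at that point, rather than attempting to conclude that $M_1$ is compact.
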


This is clear by the maximum principle if one of them is compact. If not, we can construct a compact area-minimizing CMC hypersurface with compact prescribed boundary on one of the CMC hypersurfaces, denoted by $M_1$. The other hypersurface $M_2$ serves as a barrier. Then, letting the prescribed boundary tend to infinity allows us to obtain a third stable complete noncompact CMC hypersurface, which is impossible by our main theorem.

Another interesting application of our result lies in the min-max theory of constant mean curvature hypersurfaces. Building on Theorem \ref{maintheorem}, the main result of \cite{Mazurowski-Zhou} can be extended to dimensions up to $n+1=6$. The key missing ingredient in this context is a complete classification of finite index CMC hypersurfaces with nonzero mean curvature.
\vskip.2cm

 The rest of paper is organized as follows. In section \ref{section2} we establish a Sobolev inequality for CMC hypersurfaces with finite index and demonstrate that such hypersurfaces have a finite number of ends. In section \ref{section3}, we outline the strategy of Chodosh-Li and others for CMC hypersurfaces. Section \ref{section4} is devoted for the proof of the main theorem. In the Appendix, we provide a detailed proof of extension result of Sobolev inequality.

 \subsection{Acknowledgements}
The authors would like to thank G. Carron for clarifying for us a point in \cite{carron_L^2_cohomologie}. We also thank Professor Xin Zhou and Professor Ao Sun for pointing out an application of our result in the min-max theory \cite{Mazurowski-Zhou}. We appreciate Professor Barbara Nelli for some comments on the first draft. The second author is supported by NSFC No. 12401058 and the Talent Fund of Beijing Jiaotong University No. 2024XKRC008. The first and third authors are supported by NSFC Grant No. 12471047.

\section{Finite index implies finite ends}\label{section2}
This section is devoted to proving that the number of ends of a CMC hypersurface in $\mathbb{R}^{n+1}$ with finite index must be finite. In fact, this result holds more generally in any $(n+1)$-dimensional Riemannian manifold satisfying certain curvature conditions. Hereafter, we only assume the two-sidedness of the CMC hypersurface if the mean curvature is zero, since this property is automatic for CMC hypersurfaces with nonzero mean curvature.

To begin, let us revisit the work of P. Li and J. Wang \cite{Li-Wang-finiteindex}, which establishes the finiteness of ends under certain conditions. Let $M$ be a complete noncompact Riemannian manifold, and let $\omega$ be an $L^2$-harmonic $1$-form on $M$.
Denote $h=|\omega|$, and let $H^1(L^2(M))$ represent the space of square-integrable harmonic $1$-forms. Using an algebraic lemma, Li and Wang derived the following key result.
\begin{theorem}[Li-Wang\cite{Li-Wang-finiteindex}]\label{liwang02-key theorem}
    Assume $M$ is a complete noncompact $n$-dimensional Riemannian manifold satisfying the following Sobolev type inequality

    \begin{equation}\label{sobolev type inequality}\left(\int_{M\setminus B_R(p)}(\phi h)^{\frac{2n}{n-2}}\right)^{\frac{n-2}{2}}\leq C\int_{M\setminus B_R(p)}|\nabla \phi|^2 h^2\end{equation}
    for some $R>0$, where $B_{R}(p)$ is a ball with radius $R$ centered at some $p\in M$, $C$ is a uniformly constant only depending on $M$. Then $\operatorname{dim} H^1(L^2(M))<\infty$. In particular, $M$ has finitely many ends. 
\end{theorem}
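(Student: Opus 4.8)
\emph{Proof plan.}
The statement has two parts, and I would deduce the second (finiteness of ends) from the first via the Li--Tam theory. Because $M$ carries a Sobolev inequality outside a compact set, each of its ends is nonparabolic, so for every end $E_i$ there is a bounded harmonic function $f_i$ on $M$ of finite Dirichlet energy with $f_i \to 1$ along $E_i$ and $f_i \to 0$ along the other ends. The $1$-forms $df_i$ are closed, coclosed (since $f_i$ is harmonic) and square-integrable, and the only linear relation among them is $\sum_i df_i = 0$ (reflecting $\sum_i f_i \equiv 1$); hence the number of ends is at most $\dim H^1(L^2(M)) + 1$. Thus everything reduces to proving $\dim H^1(L^2(M)) < \infty$.

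Fix $\omega \in H^1(L^2(M))$ and put $h = |\omega|$. The first step is a pointwise bound on $h$ in the region where \eqref{sobolev type inequality} is available. By the Bochner--Weitzenb\"ock formula and the refined Kato inequality for harmonic $1$-forms one has, weakly and where $h > 0$,
\[
  h\,\Delta h \;\ge\; \tfrac{1}{n-1}\,|\nabla h|^2 + \Ric(\omega^\sharp,\omega^\sharp);
\]
in the settings where the theorem is applied, \eqref{sobolev type inequality} is precisely what remains after testing this against $\phi^2 h$ and absorbing the negative part of the curvature term via stability, so I take \eqref{sobolev type inequality} together with the differential inequality above as the working data. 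Feeding test functions $\phi = \eta\, h^{\beta}$ into \eqref{sobolev type inequality} (with $\eta$ a cutoff supported in $M \setminus B_R(p)$), combining with the Caccioppoli-type estimate for powers of $h$ that the differential inequality yields, and iterating over $\beta$ (Moser iteration), one obtains a mean value inequality
\[
  \sup_{B_1(x)} h^2 \;\le\; \frac{C}{|B_2(x)|}\int_{B_2(x)} h^2 , \qquad \mathrm{dist}(x,p) \ge 2R+2 ,
\]
and hence, covering the exterior region by balls of bounded overlap and using $\int_M h^2 < \infty$, the decay $\int_{M \setminus B_r(p)} h^2 \to 0$ and the pointwise decay $\sup_{M\setminus B_r(p)} h \to 0$ as $r \to \infty$.

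The second step is the algebraic input of Li. Let $\mathcal K \subseteq H^1(L^2(M))$ be a subspace with $\dim \mathcal K = q$. Li's lemma on finite-dimensional spaces of sections furnishes, for suitable large radii $\rho$, a nonzero element $\omega \in \mathcal K$ such that a substantial fraction of its $L^2$-mass --- a fraction growing with $q$ --- is concentrated in the annulus $B_{2\rho}(p) \setminus B_\rho(p)$. By the mean value inequality from the first step this makes $\omega$ pointwise comparably large on that annulus, while the pointwise decay forces it to be small there; comparing the two bounds $q$ by a constant depending only on $n$, the Sobolev constant, and the geometry of $B_R(p)$. As $\mathcal K$ was arbitrary, $\dim H^1(L^2(M)) < \infty$.

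The main obstacle is exactly this last comparison: one must extract from Li's combinatorial lemma an element of $\mathcal K$ whose concentration in far annuli genuinely contradicts the analytic decay, with all constants independent of $\mathcal K$. A second, technical, point is the bookkeeping forced by the fact that \eqref{sobolev type inequality} holds only on $M \setminus B_R(p)$: the Moser iteration and the covering argument must be carried out in the exterior region, and the finitely many balls meeting $B_R(p)$ should contribute only a bounded correction to the dimension count, since the restriction of a space of harmonic $1$-forms to a fixed compact set is controlled by interior elliptic estimates. One should also be careful with the weak differential inequality for $h$ across its zero set and with the verification that each end is nonparabolic.
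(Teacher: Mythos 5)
This theorem is quoted in the paper from Li--Wang without proof, so the relevant benchmark is Li--Wang's original argument, and your reconstruction deviates from it at exactly the point you flag as the ``main obstacle'' --- and that is a genuine gap, not a technicality. First, your plan to run Moser iteration on balls far from $p$ to get a uniform mean value inequality and pointwise decay of $h$ at infinity is not justified by the data: the hypothesis is only the \emph{weighted} inequality \eqref{sobolev type inequality}, which is not a Sobolev inequality for arbitrary functions on far balls, and the Caccioppoli estimates for powers $h^{\beta}$ that the iteration needs require control of $\Ric(\omega^{\sharp},\omega^{\sharp})$ (in the application, of $|A|^2$) at infinity, which is neither assumed nor available for stable CMC hypersurfaces in these dimensions; the constant in your claimed inequality $\sup_{B_1(x)}h^2\leq C|B_2(x)|^{-1}\int_{B_2(x)}h^2$ has no reason to be uniform in $x$. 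Second, the appeal to Li's lemma is misstated: the lemma produces, for a finite-dimensional subspace $\mathcal K$ and a \emph{fixed} compact set $D$, a nonzero $\omega$ with $(\dim\mathcal K)\int_D|\omega|^2\leq C\,\mathrm{Vol}(D)\sup_D|\omega|^2$; it does not furnish elements whose $L^2$-mass concentrates in far annuli ``with a fraction growing with $q$'' (for any fixed $L^2$ form the mass of $B_{2\rho}\setminus B_\rho$ tends to $0$, and nothing forces a large subspace to contain elements concentrated far out). So the intended contradiction between far-annulus concentration and decay does not close as described.

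The actual Li--Wang argument stays on a fixed compact set and needs neither decay at infinity nor any curvature control there. Plugging into \eqref{sobolev type inequality} a cutoff $\phi$ that vanishes on $B_R(p)$, equals $1$ outside $B_{R+1}(p)$, and is cut off at radius $\rho\to\infty$ (admissible since $h\in L^2$) gives the concentration estimate $\bigl(\int_{M\setminus B_{R+1}(p)}h^{\frac{2n}{n-2}}\bigr)^{\frac{n-2}{n}}\leq C\int_{B_{R+1}(p)\setminus B_R(p)}h^2$; by H\"older on a fixed compact annulus this bounds $\int_{B_{R+2}(p)}h^2$-type quantities, indeed the whole relevant $L^2$ norm, by $\int_{B_{R+1}(p)}h^2$, so that a harmonic form vanishing on $B_{R+1}(p)$ vanishes identically and, quantitatively, all the mass that matters lives on a fixed compact set. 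One then needs the mean value inequality only on $B_{R+2}(p)$, where the geometry is bounded and the Bochner term is controlled, and Li's lemma applied on that fixed set yields $\dim\mathcal K\leq C(n,C,\text{geometry of }B_{R+2}(p))$ for every finite-dimensional $\mathcal K\subset H^1(L^2(M))$. Finally, note that your reduction ``number of ends $\leq\dim H^1(L^2(M))+1$'' counts only nonparabolic ends, and nonparabolicity of every end does not follow from the weighted inequality \eqref{sobolev type inequality} alone; the paper supplies it separately (Proposition \ref{ends are nonparabolic}) from the unweighted Sobolev inequality together with the infinite volume of ends, and your Li--Tam step implicitly needs that input as well.
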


As a consequence, the number of ends of a minimal hypersurface with finite index in $\mathbb{R}^{n+1}$ is finite, owing to the existence of a Sobolev inequality on minimal hypersurfaces in $\mathbb{R}^{n+1}$ \cite{Li-Wang-finiteindex}. The primary objective of this section is to extend this result to CMC hypersurfaces. The approach essentially builds on their ideas.

We shall first show the following proposition.
\begin{proposition}\label{ricquass}
    Let $M^{n} \hookrightarrow (X^{n+1},g)$ be a complete noncompact constant mean curvature hypersurface. Assume that the sectional curvature of $X$ is nonnegative, then 
    \[
        \operatorname{Ric}(\omega^{\#},\omega^{\#}) \geq -\frac{\sqrt{n-1}}{2}|A|^2 |\omega|^2
    \]
    for any 1-form $\omega$ on $M$, where $\omega^{\#}$ is the dual vector field.
\end{proposition}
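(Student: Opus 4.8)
The plan is to reduce the statement to a pointwise estimate: fix $p\in M$ and a unit vector $v\in T_pM$; it suffices to show that the Ricci curvature $\Ric$ of $M$ satisfies $\Ric(v,v)\ge -\tfrac{\sqrt{n-1}}{2}|A|^2$ at $p$, since the general case follows by writing $\omega^{\#}=|\omega|\,v$ and scaling. From there everything comes down to the Gauss equation together with an elementary inequality on the principal curvatures.

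First I would choose an orthonormal frame $e_1,\dots,e_n$ of $T_pM$ diagonalizing the shape operator, $A(e_i,e_j)=\kappa_i\delta_{ij}$, write $v=\sum_i v_i e_i$ with $\sum_i v_i^2=1$, and set $H=\tr A=\sum_j\kappa_j$. Tracing the Gauss equation gives
\[
\Ric(v,v) = \sum_{i=1}^n R_X(e_i,v,v,e_i) + H\,A(v,v) - \sum_{i=1}^n A(e_i,v)^2,
\]
where $R_X$ is the Riemann tensor of the ambient $X$. Since $X$ has nonnegative sectional curvature, each summand $R_X(e_i,v,v,e_i)\ge 0$ (a sectional curvature times a nonnegative factor), so that term may simply be discarded. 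Using $A(v,v)=\sum_i\kappa_i v_i^2$ and $\sum_i A(e_i,v)^2=\sum_i\kappa_i^2 v_i^2$, together with the fact that $(v_i^2)$ is a set of convex-combination weights, we obtain
\[
\Ric(v,v) \;\ge\; \sum_{i=1}^n (H\kappa_i-\kappa_i^2)\,v_i^2 \;\ge\; \min_{1\le i\le n}(H\kappa_i-\kappa_i^2) \;=\; -\max_{1\le i\le n}(\kappa_i^2-H\kappa_i).
\]
This last step is the only place where the subtlety that $v$ need not be a principal direction enters, and it is absorbed by this convexity observation.

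It then remains to prove the purely algebraic bound $\kappa_i^2-H\kappa_i\le \tfrac{\sqrt{n-1}}{2}|A|^2$ for each fixed $i$. Writing $\kappa_i^2-H\kappa_i=-\kappa_i\sum_{j\ne i}\kappa_j$ and applying Cauchy--Schwarz over the $n-1$ indices $j\ne i$ followed by the AM--GM inequality,
\[
-\kappa_i\sum_{j\ne i}\kappa_j \;\le\; |\kappa_i|\,\sqrt{n-1}\Big(\sum_{j\ne i}\kappa_j^2\Big)^{1/2} \;\le\; \frac{\sqrt{n-1}}{2}\Big(\kappa_i^2+\sum_{j\ne i}\kappa_j^2\Big) \;=\; \frac{\sqrt{n-1}}{2}\,|A|^2,
\]
which completes the proof after multiplying through by $|\omega|^2$.

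I do not expect a genuine obstacle here; the one point requiring care is pinning down the constant $\tfrac{\sqrt{n-1}}{2}$, which is dictated by the two Cauchy--Schwarz-type estimates above and is in fact sharp — it is attained when $\kappa_i=\sqrt{n-1}$ and $\kappa_j=-1$ for $j\ne i$. I would also note in passing that constancy of the mean curvature plays no role in this proposition: only the Gauss equation and the nonnegativity of the ambient sectional curvature are used, so the estimate holds for arbitrary smooth hypersurfaces of nonnegatively curved ambient manifolds.
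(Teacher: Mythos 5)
Your proof is correct and follows essentially the same route as the paper: diagonalize the second fundamental form, trace the Gauss equation, discard the ambient curvature terms by nonnegative sectional curvature, and bound $-\kappa_i\sum_{j\neq i}\kappa_j$ by $\tfrac{\sqrt{n-1}}{2}|A|^2$ via an elementary (sharp) inequality. The only cosmetic difference is that you reduce to the worst principal direction and use Cauchy--Schwarz followed by AM--GM, whereas the paper keeps the weights $a_i^2$ and applies a weighted AM--GM term by term; your side remarks (sharpness of the constant and the irrelevance of constancy of $H$) are also accurate.
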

\begin{proof}
    Choose an orthonormal frame $\{e_{i}\}_{i=1}^{n}$ such that $h_{ij}=\lambda_i\delta_{ij}$, then assume that $\omega^{\#}= \sum_{i=1}^{n}a_{i}e_{i}$.
    According to the Gauss equation, we have
    \[
    \begin{aligned}
   \operatorname{Ric}(\omega^{\#},\omega^{\#})
    &=\sum_{i=1}^{n}a_{i}a_{j}(Hh_{ij}-\sum_{k=1}^{n}h_{ik}h_{jk})+\sum_{i,j,k=1}^na_ia_jR^X_{ikjk}\\
    &\geq\sum_{i=1}^{n}a_{i}^2(H\lambda_{i}-\lambda_{i}^2)\\
    &=\sum_{i=1}^{n}a_{i}^2\big(\sum_{j\neq i}\lambda_{i}\lambda_{j}\big)\\
    &\geq-\sum_{i=1}^{n}a_{i}^2\Big(\frac{\sqrt{n-1}}{2}\sum_{j\neq i}\big(\frac{\lambda_{i}^2}{n-1}+\lambda_{j}^{2}\big)\Big)\\
    &=-\frac{\sqrt{n-1}}{2}|A|^2|\omega|^2.
    \end{aligned}
    \]
    The proof is complete.
\end{proof}

\begin{proposition}\label{sobolevineq}
    Let $M^{n} \hookrightarrow (X^{n+1},g)$ $(n\geq 3)$ be a complete noncompact constant mean curvature hypersurface with finite index. Assume that the sectional curvature of $X$ is nonnegative, the asymptotic volume ratio is nonzero, and $X$ has bounded geometry. Then, $M$ satisfies \begin{equation}
        \left(\int_M|f|^{\frac{2n}{n-2}}\right)^{\frac{n-2}{n}}\leq C(n,\theta)\int_M |\nabla f|^2
    \end{equation}
    for any function $f$ compactly supported in $M$, where the constant $C(n,\theta)$ depends on the dimension $n$ and the asymptotic volume ratio defined by
    \[\theta=\lim_{R\rightarrow \infty}\frac{V(B_R(p))}{R^{n+1}}.\]
\end{proposition}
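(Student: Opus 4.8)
The plan is to obtain the inequality in three movements: an ambient input giving a Michael--Simon type inequality on $M$ that carries a mean curvature term, an elementary iteration turning it into an $L^{2}$ statement, and finally an application of finite index to absorb the mean curvature term and, when $H\neq 0$, to globalize. For the ambient input, when $X=\mathbb{R}^{n+1}$ this is the classical Michael--Simon inequality; in general, since $X$ has nonnegative sectional curvature and positive asymptotic volume ratio $\theta$, I would invoke Brendle's Sobolev inequality for submanifolds of manifolds with nonnegative sectional curvature: applied to a smooth compact domain with boundary containing $\supp f$ and with $f$ vanishing near that boundary, it yields, for every $f\in C_c^\infty(M)$,
\[
\left(\int_M |f|^{\frac{n}{n-1}}\right)^{\frac{n-1}{n}} \le C(n,\theta)\int_M\bigl(|\nabla f|+|H|\,|f|\bigr),
\]
where $|H|$ is the constant length of the mean curvature vector; bounded geometry of $X$ is the standing hypothesis that lets the relevant constants be taken uniform.

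Next I would iterate. Applying the previous inequality to $f=|u|^{p}$ with $p=\frac{2(n-1)}{n-2}$ --- the unique exponent for which $p\cdot\frac{n}{n-1}=\frac{2n}{n-2}$, $\ 2(p-1)=\frac{2n}{n-2}$ and $p-\frac{n}{n-2}=1$ --- and using $\bigl|\nabla|u|^{p}\bigr|=p\,|u|^{p-1}|\nabla u|$, Cauchy--Schwarz and rearranging, one reaches
\[
\left(\int_M |u|^{\frac{2n}{n-2}}\right)^{\frac{n-2}{2n}} \le C(n,\theta)\left[\left(\int_M|\nabla u|^{2}\right)^{1/2}+|H|\left(\int_M |u|^{\frac{2n}{n-2}}\right)^{-1/2}\int_M |u|^{p}\right]
\]
for all $u\in C_c^\infty(M)$. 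If $H=0$ this is already the assertion of the proposition, with no restriction on $\supp u$.

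Now suppose $H\neq 0$. Since $|A|^{2}\ge H^{2}/n$ pointwise and $\operatorname{Ric}_X(\nu,\nu)\ge 0$, and since finite index of the Jacobi operator $J=\Delta_M+|A|^{2}+\operatorname{Ric}_X(\nu,\nu)$ localizes stability outside a compact set, there is a compact $K\subset M$ with
\[
\int_M|\nabla u|^{2} \ge \int_M\bigl(|A|^{2}+\operatorname{Ric}_X(\nu,\nu)\bigr)u^{2} \ge \frac{H^{2}}{n}\int_M u^{2}\qquad\text{for all }u\in C_c^\infty(M\setminus K),
\]
i.e.\ $\|u\|_{L^{2}}\le \frac{\sqrt n}{|H|}\|\nabla u\|_{L^{2}}$. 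Interpolating, $\int_M |u|^{p}\le \|u\|_{L^{2}}^{(1-\alpha)p}\,\|u\|_{L^{2n/(n-2)}}^{\alpha p}$ with $\frac1p=\frac{1-\alpha}{2}+\frac{\alpha(n-2)}{2n}$; the choice of $p$ makes $\alpha p=\frac{n}{n-2}$ and $(1-\alpha)p=1$ exactly, so the bad term in the iterated inequality is at most $|H|\,\|u\|_{L^{2}}\le \sqrt n\,\|\nabla u\|_{L^{2}}$, and we conclude
\[
\left(\int_M |u|^{\frac{2n}{n-2}}\right)^{\frac{n-2}{2n}} \le C(n,\theta)\left(\int_M|\nabla u|^{2}\right)^{1/2}\qquad\text{for all }u\in C_c^\infty(M\setminus K).
\]

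It remains, when $H\neq 0$, to pass from $C_c^\infty(M\setminus K)$ to $C_c^\infty(M)$: the inequality just proved forces every end of $M$ to be non-parabolic, and from there one upgrades to the global Sobolev inequality by the method of Carron~\cite{carron_L^2_cohomologie}, carried out in the Appendix. I expect this globalization to be the delicate step. Writing $f\in C_c^\infty(M)$ as $\eta f+(1-\eta)f$ with $\eta$ a cutoff equal to $1$ near $K$, one applies the $M\setminus K$ inequality to $(1-\eta)f$ and a local Sobolev inequality on a fixed compact neighbourhood of $K$ to $\eta f$, but the latter leaves a term $\|\eta f\|_{L^{2}}$ over that fixed neighbourhood which must be controlled by the global Dirichlet energy $\|\nabla f\|_{L^{2}(M)}$ --- a capacity-type estimate for which non-parabolicity is precisely what is needed, and this is the point at which I would lean on Carron's argument.
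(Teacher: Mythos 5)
Your proposal is correct and follows essentially the same route as the paper: Brendle's Michael--Simon type inequality with constant depending on $(n,\theta)$, the substitution $\varphi=|f|^{2(n-1)/(n-2)}$ with Cauchy--Schwarz, absorption of the mean curvature term using $H^{2}\le n|A|^{2}$ together with stability outside a compact set coming from finite index, and finally Carron's extension result (Proposition~\ref{carron-proposition}) to pass from $C_c^\infty(M\setminus K)$ to $C_c^\infty(M)$. The only cosmetic differences are that you phrase the absorption as a spectral gap $\lVert u\rVert_{L^2}\le \frac{\sqrt n}{|H|}\lVert\nabla u\rVert_{L^2}$ and split the cases $H=0$ and $H\neq 0$, while the paper carries the $H$ term through directly; both are the same estimate.
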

\begin{proof}
    It follows from \cite{simonbrendle-michael} that Michael-Simon-Sobolev inequality holds, i.e.,
    \begin{equation}
    c(n,\theta)\Big(\int_{M}|\varphi|^{\frac{n}{n-1}}\Big)^{\frac{n-1}{n}}\leq \int_{M} |\nabla \varphi| + |\varphi H|
    \end{equation}
    for any $\varphi\in C_{0}^{\infty}(M)$, where $C(n,\theta)$ is a constant that depends on the dimension $n$ and  $\theta$ of $X$.
    Taking $\varphi=f^{\frac{2(n-1)}{n-2}}$, we have
    \begin{equation}\label{sobo2}
    c(n,\theta)\Big(\int_{M}|f|^{\frac{2n}{n-2}}\Big)^{\frac{n-1}{n}}\leq \int_{M} \frac{2(n-1)}{n-2}|f|^{\frac{n}{n-2}}|\nabla f| + |f|^{\frac{2(n-1)}{n-2}}|H|.
    \end{equation}
    According to the H\"{o}lder inequality, we have
    \begin{equation}\label{es1}
    \begin{aligned}
    \int_{M}|f|^{\frac{n}{n-2}}|\nabla f| 
    &\leq \Big(\int_{M}|\nabla f|^2\Big)^{\frac{1}{2}}\Big(\int_{M}|f|^{\frac{2n}{n-2}}\Big)^{\frac{1}{2}}.    
    \end{aligned}
    \end{equation}
    
 On the other hand, since $M$ has finite index, it follows from \cite{Fischer-Colbrie-On-complete-minimal} that there exists a compact subset $\Omega$ such that $M\setminus \Omega$ is strongly stable. Without lost of generality, we assume that $\Omega$ is contained in a ball $B_R(p)$ with radius $R$ and centered at $p\in \Omega.$ Then for any compactly supported function $\psi\in C^1_c(M\setminus B_R)$,
 \begin{equation*}
     \int_{M\setminus B_R}|A|^2\psi^2\leq \int_{M\setminus B_R}|\nabla\psi|^2.
 \end{equation*}
 Then for $f\in C^1_c(M\setminus B_R)$
    \begin{equation}\label{es2}
    \begin{aligned}
    \int_{M}|f|^{\frac{2(n-1)}{n-2}}|H| 
    &\leq \Big(\int_{M}f^2H^2\Big)^{\frac{1}{2}}\Big(\int_{M}|f|^{\frac{2n}{n-2}}\Big)^{\frac{1}{2}}\\
    &\leq \Big(\int_{M}n f^2|A|^2\Big)^{\frac{1}{2}}\Big(\int_{M}|f|^{\frac{2n}{n-2}}\Big)^{\frac{1}{2}}\\
    &\leq \Big(\int_{M}n|\nabla f|^2\Big)^{\frac{1}{2}}\Big(\int_{M}|f|^{\frac{2n}{n-2}}\Big)^{\frac{1}{2}}.
    \end{aligned}
    \end{equation}
    Plugging \eqref{es1}and \eqref{es2} into \eqref{sobo2}, we obtain the inequality
    \begin{equation}
        \left(\int_M|f|^{\frac{2n}{n-2}}\right)^{\frac{n-2}{n}}\leq C(n,\theta)\int_M |\nabla f|^2.
    \end{equation}
    Finally, the result follows from the Proposition \ref{carron-proposition}.
    
\end{proof}
\begin{proposition}\label{ends are nonparabolic}
    Let $M^{n} \hookrightarrow (X^{n+1},g)$ $(n\geq 3)$ be a complete noncompact, constant mean curvature hypersurface with finite index. Assume that the sectional curvature of $X$ is nonnegative and $X$ is of bounded geometry, then each end of $M$ is nonparabolic.  In particular, the number of ends of $M$ is bounded by the dimension of $H^1(L^2(M))+1.$
\end{proposition}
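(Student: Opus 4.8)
The strategy is to equip each end of $M$ with a Sobolev inequality sharp enough to force compact subsets to have positive capacity, and then to feed the resulting nonparabolicity into the Li--Tam structure theory of ends. To set up, since $M$ has finite index, \cite{Fischer-Colbrie-On-complete-minimal} produces a compact set $\Omega$, which we may assume sits inside a geodesic ball $B_R(p)$, such that $M\setminus\overline{\Omega}$ is strongly stable; in particular every end $E$ of $M$ --- i.e.\ every unbounded connected component of $M\setminus B_R(p)$ --- is strongly stable, so that $\int_E|A|^2\psi^2\le\int_E|\nabla\psi|^2$ for all $\psi\in C^\infty_c(E)$. I would also record that $M$ has only finitely many ends: combining the Sobolev inequality of Proposition \ref{sobolevineq} (which is in fact established first on $M\setminus B_R(p)$) with the Bochner formula for $1$-forms, the refined Kato inequality, and the Ricci bound of Proposition \ref{ricquass}, one obtains, as in Li--Wang \cite{Li-Wang-finiteindex}, the weighted inequality \eqref{sobolev type inequality} with $h=|\omega|$ for $\omega$ an $L^2$-harmonic $1$-form; Theorem \ref{liwang02-key theorem} then gives $\dim H^1(L^2(M))<\infty$ together with the finiteness of the ends. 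It therefore remains only to prove that each end is nonparabolic.

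For a fixed end $E$ I would rerun the proof of Proposition \ref{sobolevineq} with every integral taken over $E$: insert $\varphi=f^{2(n-1)/(n-2)}$ into the Michael--Simon--Sobolev inequality, bound $\int_E|f|^{n/(n-2)}|\nabla f|$ by Hölder, and control $\int_E|f|^{2(n-1)/(n-2)}|H|$ using $H^2\le n|A|^2$ and the strong stability of $E$. This yields a constant $C$ with $\big(\int_E|f|^{2n/(n-2)}\big)^{(n-2)/n}\le C\int_E|\nabla f|^2$ for all $f\in C^\infty_c(E)$. Given a compact $K\subset E$ with nonempty interior and any $\phi\in C^\infty_c(E)$ with $\phi\ge 1$ on $K$, this inequality gives $\Vol(K)^{(n-2)/n}\le\big(\int_E\phi^{2n/(n-2)}\big)^{(n-2)/n}\le C\int_E|\nabla\phi|^2$; taking the infimum over such $\phi$ shows that the capacity of $K$ in $E$ is at least $\Vol(K)^{(n-2)/n}/C>0$. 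As in \cite{Li-Wang-finiteindex}, an end containing a compact set of positive capacity is nonparabolic, so $E$ is nonparabolic.

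Finally, with $M$ having $k<\infty$ ends, all nonparabolic, I would invoke the structure theory of nonparabolic ends (Li--Tam; cf.\ \cite{Li-Wang-finiteindex}): such an $M$ carries $k$ linearly independent bounded harmonic functions of finite Dirichlet energy, so the space $\mathcal{H}_D(M)$ of these functions satisfies $\dim\mathcal{H}_D(M)\ge k$. The differential $f\mapsto df$ maps $\mathcal{H}_D(M)$ into $H^1(L^2(M))$ --- its image consisting of closed, coclosed, square-integrable $1$-forms --- with kernel exactly the constants, so $\dim H^1(L^2(M))\ge k-1$; that is, the number of ends is at most $\dim H^1(L^2(M))+1$. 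I expect the crux to be the end-wise Sobolev inequality of the second step: because $H\neq 0$ the Michael--Simon term $|\varphi H|$ must be absorbed into the Dirichlet energy, which is possible only because finite index makes each end strongly stable and because $H^2\le n|A|^2$ --- for a minimal hypersurface this step is automatic. The passage to \eqref{sobolev type inequality} in the first step, and the counting in the last, are by now standard.
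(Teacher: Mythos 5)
Your end-wise Sobolev inequality and your final counting step are fine, but the central deduction of nonparabolicity is not. You infer that $E$ is nonparabolic from the positivity of the capacity of a compact set $K\subset E$ computed with test functions $\phi\in C_c^\infty(E)$, $\phi\ge 1$ on $K$. That capacity is positive for \emph{every} end of \emph{every} manifold, parabolic or not: any such $\phi$ must drop from $1$ on $K$ to $0$ near $\partial E$, and a Poincar\'e inequality on a fixed compact collar between $\partial E$ and $K$ already forces $\int_E|\nabla\phi|^2\ge c(K,E)>0$, regardless of the geometry at infinity (the parabolic end $\{|x|>1\}$ of $\mathbb{R}^2$ contains compact sets of positive capacity in exactly this sense). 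Nonparabolicity of an end is instead equivalent to positivity of the capacity of $\partial E$ relative to $E$, i.e.\ the infimum of $\int_E|\nabla\phi|^2$ over functions equal to $1$ \emph{on} $\partial E$ with compact support in $\overline{E}$; your Sobolev inequality, valid only for functions compactly supported in the open end, cannot be applied to those test functions. So the sentence ``an end containing a compact set of positive capacity is nonparabolic'' is not the statement proved in \cite{Li-Wang-finiteindex}, and this step fails.

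The paper closes this gap differently: the Sobolev inequality of Proposition \ref{sobolevineq} (extended from $M\setminus B_R(p)$ to all of $M$ via Proposition \ref{carron-proposition}) combined with \cite[Corollary 4]{Li-Wang-finiteindex} yields only the dichotomy that each end either has finite volume or is nonparabolic, and the finite-volume alternative is then excluded because each end of a complete noncompact CMC hypersurface in an ambient manifold of bounded geometry has infinite volume (the Sobolev inequality itself gives $\lvert B_r(x)\rvert\ge c\,r^n$ for balls contained in the end). Your proposal never invokes this infinite-volume input, and without it (or a genuine barrier construction in the sense of Li--Tam) the nonparabolicity claim is unproved. By contrast, your last step --- bounding the number of nonparabolic ends by $\dim H^1(L^2(M))+1$ via $f\mapsto df$ on bounded harmonic functions of finite Dirichlet energy --- is correct and is exactly \cite[Corollary 2]{Li-Wang-finiteindex}, which the paper cites; your preliminary digression on the finiteness of the number of ends merely reproduces Theorem \ref{finiteend} and is not needed for this proposition.
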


\begin{proof}
    According to \cite[Corollary 4]{Li-Wang-finiteindex} and Proposition \ref{sobolevineq},  each end of $M$ must either have finite volume or be non-parabolic. It is well-known that each end of complete noncompact CMC hypersurface in Riemannian manifold with bounded geometry have infinite volume. Thus, each end of $M$ is nonparabolic. The last assertion follows from \cite[Corollary 2]{Li-Wang-finiteindex}: the number of nonparabolic ends of $M$ is bounded from above by $\operatorname{dim} H^1(L^2(M))+1.$
\end{proof}

\begin{theorem}\label{finiteend}
    Let $M^{n} \hookrightarrow (X^{n+1},g)$ $(3\leq n \leq 6)$ be a complete noncompact constant mean curvature hypersurface with finite index. Assume that the sectional curvature of $X$ is nonnegative and $X$ is of bounded geometry and the asymptotic volume ratio  of $X$ is nonzero, then $\operatorname{dim} H^{1}(L^{2}(M)) < \infty$. In particular, $M$ has finite ends.
\end{theorem}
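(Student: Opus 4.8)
The plan is to verify that $M$ satisfies the weighted Sobolev-type inequality \eqref{sobolev type inequality} on the complement of a suitable ball, so that Theorem \ref{liwang02-key theorem} of Li--Wang applies and yields $\dim H^1(L^2(M)) < \infty$; the finiteness of ends is then immediate from that theorem (and is recorded with a sharper count in Proposition \ref{ends are nonparabolic}). Accordingly, fix an $L^2$-harmonic $1$-form $\omega$ on $M$ and write $h = |\omega|$. As in the proof of Proposition \ref{sobolevineq}, finite index furnishes, via \cite{Fischer-Colbrie-On-complete-minimal}, a radius $R > 0$ such that $M \setminus B_R(p)$ is strongly stable, hence
\[
\int_{M\setminus B_R} |A|^2\psi^2 \ \le\ \int_{M\setminus B_R}|\nabla\psi|^2 \qquad \text{for all } \psi \in C^1_c(M\setminus B_R).
\]

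The next step is a Bochner estimate for $h$. On the open set $\{h > 0\}$, the Bochner formula for the harmonic form $\omega$, combined with the refined Kato inequality $|\nabla\omega|^2 \ge \frac{n}{n-1}|\nabla h|^2$ and the Ricci lower bound of Proposition \ref{ricquass}, gives
\[
h\,\Delta h \ \ge\ \frac{1}{n-1}|\nabla h|^2 \ -\ \frac{\sqrt{n-1}}{2}|A|^2 h^2,
\]
which, after the standard regularization $h_\eps = \sqrt{h^2 + \eps^2}$, holds in the distributional sense on all of $M$.

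The heart of the argument is then an absorption. For $\phi \in C^1_c(M\setminus B_R)$, multiply the Bochner inequality by $\phi^2$ and integrate by parts on the left, then estimate the resulting term $\int |A|^2\phi^2 h^2$ by the strong stability inequality with test function $\psi = \phi h$. Collecting terms yields an inequality of the form
\[
\Big(\frac{n}{n-1} - \frac{\sqrt{n-1}}{2}\Big)\int_M \phi^2|\nabla h|^2 \ \le\ C\int_M h\,|\nabla\phi|\,|\nabla h| \ +\ C\int_M |\nabla\phi|^2 h^2 .
\]
The coefficient on the left is strictly positive exactly when $4n^2 > (n-1)^3$, i.e.\ for $n \le 6$; this is precisely the origin of the dimension restriction in the statement. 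For such $n$ one absorbs the cross term $\int h|\nabla\phi||\nabla h|$ into the left-hand side by Cauchy--Schwarz, obtaining $\int_M \phi^2|\nabla h|^2 \le C\int_M |\nabla\phi|^2 h^2$. Expanding $|\nabla(\phi h)|^2 = |\nabla\phi|^2 h^2 + 2\phi h\langle\nabla\phi,\nabla h\rangle + \phi^2|\nabla h|^2$ and absorbing the cross term once more gives $\int_M |\nabla(\phi h)|^2 \le C\int_M|\nabla\phi|^2 h^2$. Applying the global Sobolev inequality of Proposition \ref{sobolevineq} to $f = \phi h$ then produces exactly \eqref{sobolev type inequality}, and Theorem \ref{liwang02-key theorem} completes the proof.

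The main obstacle is the coefficient bookkeeping in this absorption: one needs the Bochner gain $\frac{n}{n-1}|\nabla h|^2$ to survive after the stability inequality has contributed a term carrying the unfavorable sign $+\frac{\sqrt{n-1}}{2}|A|^2 h^2$ and after every cross term has been split off, and it is exactly this balance that forces $n \le 6$. A secondary technical point --- the non-smoothness of $h$ on its zero locus --- is handled routinely through the $h_\eps$ approximation together with a cutoff and limiting argument, using that the Sobolev inequality of Proposition \ref{sobolevineq} is available for all compactly supported (not merely smooth) functions.
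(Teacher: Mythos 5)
Your proposal is correct and follows essentially the same route as the paper: stability outside a compact set from finite index, the Bochner formula with the refined Kato inequality and the Ricci lower bound of Proposition \ref{ricquass}, an absorption argument whose leading coefficient $\tfrac{n}{n-1}-\tfrac{\sqrt{n-1}}{2}$ (the paper writes it as $\tfrac{1}{n-1}+1-\tfrac{\sqrt{n-1}}{2}$) forces $n\le 6$, and then Proposition \ref{sobolevineq} applied to $\phi h$ to reach the weighted inequality \eqref{sobolev type inequality} and invoke Theorem \ref{liwang02-key theorem}. Your remarks on regularizing $h$ are a standard technical supplement the paper leaves implicit; nothing essential differs.
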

\begin{proof}
    Recall $\omega$ is a harmonic 1-form and denote $h=|\omega|\in L^{2}(M)$.
    By Bochner formula and Proposition \ref{ricquass}, we have
    \begin{equation}\label{es3}
    \begin{aligned}
        h\Delta h 
        &=\text{Ric}(\omega,\omega)+ |\nabla\omega|^2-|\nabla h|^2\\
        &\geq -\frac{\sqrt{n-1}}{2}|A|^2h^2+ |\nabla\omega|^2-|\nabla h|^2.
    \end{aligned}
    \end{equation}
    Using the improved Kato's inequality (see \cite{Li-Wang-finiteindex})
    \[
        |\nabla\omega|^2 \geq \frac{n}{n-1}|\nabla h|^2,
    \]we have
    \begin{equation}\label{ineq4}    
    h\Delta h \geq -\frac{\sqrt{n-1}}{2}|A|^2h^2+\frac{|\nabla h|^2}{(n-1)}.
    \end{equation}
     Multiplying \eqref{ineq4} with $\phi^2$ where $\phi$ is a function compactly supported in $M\setminus B_{R_0}(p)$ and integrating on $M\backslash B_{R_0}(p)$ yield
    \begin{equation}\label{equ1}
        \frac{\sqrt{n-1}}{2}\int_{M\backslash B_{R_0}(p)}\phi^2 |A|^2 h^2 + \int_{M\backslash B_{R_0}(p)}\phi^2 h \Delta h \geq \frac{1}{n-1}\int_{M\backslash B_{R_0}(p)} |\nabla h|^2\phi^2.
    \end{equation}
    Using integration by parts and the stability inequality, we can estimate the terms on the left hand side as follows.
    \begin{equation}\label{es4}
    \begin{aligned}
    &\frac{\sqrt{n-1}}{2}\int_{M\backslash B_{R_0}(p)}\phi^2 |A|^2 h^2 + \int_{M\backslash B_{R_0}(p)}\phi^2 h \Delta h \\
    &\leq \frac{\sqrt{n-1}}{2}\int_{M\backslash B_{R_0}(p)}|\nabla(\phi h)|^2 - 2\int_{M\backslash B_{R_0}(p)}\phi h \langle \nabla\phi,\nabla h\rangle - \int_{M\backslash B_{R_0}(p)}\phi^2|\nabla h|^2\\
    &=\frac{\sqrt{n-1}}{2}\int_{M\backslash B_{R_0}(p)}|\nabla \phi|^2h^2 + \big(\sqrt{n-1}-2\big)\int_{M\backslash B_{R_0}(p)}\phi h \langle \nabla \phi , \nabla h\rangle \\
    &+ \big(\frac{\sqrt{n-1}}{2}-1\big)\int_{M\backslash B_{R_0}(p)}\phi^2|\nabla h |^2.    
    \end{aligned}
    \end{equation}
    Plugging \eqref{es4} into \eqref{equ1} and using H\"{o}lder inequality, we obtain
    \begin{equation}
    \begin{aligned}
        &\big(\frac{1}{n-1}+1-\frac{\sqrt{n-1}}{2}\big)
        \int_{M\backslash B_{R_0}(p)}\phi^2|\nabla h|^2\\
        &\leq \frac{\sqrt{n-1}}{2}\int_{M\backslash B_{R_0}(p)}|\nabla \phi|^2h^2 + \big(\sqrt{n-1}-2\big)\int_{M\backslash B_{R_0}(p)}\phi h \langle \nabla \phi , \nabla h\rangle\\
        &\leq \big(\frac{\sqrt{n-1}}{2}+\frac{1}{\epsilon}\big|\frac{\sqrt{n-1}}{2}-1\big|)\int_{M\backslash B_{R_0}(p)}h^2|\nabla \phi|^2\\
        &\ \ \ +\epsilon\big|\frac{\sqrt{n-1}}{2}-1\big| \int_{M\backslash B_{R_0}(p)}\phi^2 |\nabla h|^2.
    \end{aligned}
    \end{equation}
    For $\forall\  3\leq n\leq 6$, we have that $\big(\frac{1}{n-1}+1-\frac{\sqrt{n-1}}{2}\big)>0$. Choosing $\epsilon$ small enough, we have
    \begin{equation}\label{es5}
    \begin{aligned}
    \int_{M\backslash B_{R_0}(p)}\phi^2|\nabla h|^2\leq C(n)\int_{M\backslash B_{R_0}(p)}h^2|\nabla \phi|^2.
    \end{aligned}
    \end{equation}
    By Proposition \ref{sobolevineq} and \eqref{es5}, we have
    \begin{equation}
        \Big(\int_{M}|\phi h|^{\frac{2n}{n-2}}\Big)^{\frac{n-2}{n}} \leq C(n,\theta) \int_{M\backslash B_{R_0}(p)}h^2|\nabla \phi|^2.
    \end{equation}
    Thus, it follows from Theorem \ref{liwang02-key theorem} that the dimension of $H^1(L^2(M))$ is finite. Furthermore, the number of ends is finite as shown in Proposition \ref{ends are nonparabolic}.
\end{proof}
\begin{remark}
    One should compare Theorem \ref{finiteend} with the work of P. Li and J. Wang \cite[Theorem 4.1]{Li-Wang-nonnegatively-curved-manifold}. They proved that a properly immersed minimal hypersurface with finite index in a nonnegatively curved Riemannian manifold must have finite ends. While properness is essential in their result, this assumption can be relaxed if the ambient manifold has bounded geometry, which is precisely the setting we consider. However, their proof relies heavily on the minimality of the hypersurface, specifically the subharmonicity of the Busemann function, a property that does not directly extend to constant mean curvature hypersurfaces. As a result, it is unclear how their approach could be adapted to the CMC case, leaving open the need for new techniques in this setting.
\end{remark}

    It is interesting to explore what weaker curvature conditions, compared to nonnegative sectional curvature of ambient manifolds, can imply that the number of ends of a constant mean curvature hypersurface (or even a minimal hypersurface) with finite index is finite.
    In a recent work by the second author and G. Wang \cite{hong-wang-spectral-splitting}, we prove that an index-zero CMC hypersurface $M$ in a Riemannian manifold $X^{n+1}$ with nonnegative biRic curvature (defined below) and dimension $n\leq 5$ has at most two ends.  This is also implicitly showed in recent works of Antonelli-Pozzetta-Xu\cite{antonelli-pozzetta-xu} and Catino-Mari-Mastrolia-Roncoroni \cite{catino-Mari-Mastrolia-Roncoroni} independently.

Let us conclude this section by emphasizing that the Euclidean spaces $\mathbb{R}^{n+1}$ (for $3 \leq n \leq 6$) satisfy the necessary assumptions outlined in Theorem \ref{finiteend}.

\section{Conformal metric and construction of $\mu$-bubble}\label{section3}
In this section, we restrict our discussion to Euclidean space $\mathbb{R}^{n+1}$. Let $M$ be a complete noncompact CMC hypersurface with finite index in $\mathbb{R}^{n+1}$. Assume that $M \subset \mathbb{R}^{n+1}$ contains the origin after rigid motions. We first recall the definitions of the biRic curvature and the $\alpha$-biRic curvature. Under the induced metric on $M$, the biRic curvature operator is defined as
\[
\operatorname{biRic}(x, y) = \operatorname{Ric}_M(x, x) + \operatorname{Ric}_M(y, y) - R_M(x, y, x, y),
\]
for any two orthonormal vectors $x, y \in T_pM$, where $R_M$ is the sectional curvature operator of $M$. The $\alpha$-biRic curvature operator is defined as
\[
\operatorname{biRic}_\alpha(x, y) = \operatorname{Ric}_M(x, x) + \alpha \operatorname{Ric}_M(y, y) -\alpha R_M(x, y, x, y),
\]
which reduces to the biRic curvature operator when $\alpha = 1$. Then, the biRic curvature and $\alpha$-biRic curvature are defined as
\[
\operatorname{biRic}(p) = \min\{\operatorname{biRic}(x, y) : x, y \text{ are orthonormal in } T_pM\}
\]
and
\[
\operatorname{biRic}_\alpha(p) = \min\{\operatorname{biRic}_\alpha(x, y) : x, y \text{ are orthonormal in } T_pM\},
\]
respectively. The study of biRic curvature on stable minimal hypersurfaces dates back to Shen and Ye \cite{shenyingyerugang}.

Since $M$ has finite index, $M \setminus B_{R_0}(0)$ is stable for a sufficiently large radius $R_0$. Hereafter the ball denotes intrinsic ball of $M$. For simplicity, denote $B_{R_0}(0)$ by $B_{R_0}$. Let 
\[
    a_{n} = \begin{cases}
        \frac{111}{100}, & n=5\\
        \frac{11}{10}, & n=4\\
        1, &n=3
    \end{cases}
    \quad \text{and} \quad  
    \alpha_{n} = \begin{cases}
        \frac{93}{100}, &n=5\\
        1, & n=4\\
        1, &n=3.
    \end{cases}
\]
which will be clear later.
We have the following result.
\begin{proposition}\label{lower bound for alpha biric}
    Let $M^n\hookrightarrow \mathbb{R}^{n+1}$ $(3\leq n \leq 5)$ be a complete noncompact hypersurface with constant mean curvature $H\neq 0$ and finite index . Then there exist $\delta_n >0$ such that 
    \[
    \int_{M}|\nabla \varphi|_{g}^2-\frac{1}{a_n}(\delta_n-\operatorname{biRic}_{\alpha_{n}})\varphi^2 \geq 0 
    \]
    for any $\varphi \in C_{c}^{1}(M\setminus B_{R_0})$.
\end{proposition}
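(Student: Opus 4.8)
The plan is to transplant the strong stability of $M$ outside a compact set into the conformal metric $\tilde g=r^{-2}g$ and to play it off against the conformal $\alpha$-biRic curvature, in the spirit of \cite{chodoshliR4anisotropic,chodosh-li-minter-stryker-R5,mazet}, carrying the extra mean-curvature terms throughout. Since $M$ has finite index, \cite{Fischer-Colbrie-On-complete-minimal} furnishes $R_0$ (which we are free to enlarge) such that $M\setminus B_{R_0}$ is strongly stable; as $\mathbb{R}^{n+1}$ is flat this is the inequality $\int_M|A|^2\psi^2\le\int_M|\nabla\psi|^2$ for $\psi\in C^1_c(M\setminus B_{R_0})$, while the CMC hypothesis gives the pointwise bound $H^2\le n|A|^2$. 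Using $|\tilde\nabla\varphi|^2_{\tilde g}\,dV_{\tilde g}=r^{2-n}|\nabla\varphi|^2\,dV_g$ and $\varphi^2\,dV_{\tilde g}=r^{-n}\varphi^2\,dV_g$, the assertion is equivalent to
\[
\int_M r^{2-n}|\nabla\varphi|^2\,dV_g\ +\ \frac1{a_n}\int_M\big(\widetilde{biRic}_{\alpha_n}-\delta_n\big)\,r^{-n}\varphi^2\,dV_g\ \ge\ 0 ,\qquad \varphi\in C^1_c(M\setminus B_{R_0}).
\]

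First I would compute $\widetilde{biRic}_{\alpha}$ for $(M,\tilde g)$. The inputs are: the Gauss equations of $M\hookrightarrow\mathbb{R}^{n+1}$ (so that the curvature of $(M,g)$ is algebraic in $A,H$), the conformal transformation laws for $\Ric$ and $\Rm$ under $\tilde g=e^{2u}g$ with $u=-\log r$, and the formulas $\nabla^2_M r=\tfrac1r(g-dr\otimes dr)+\langle x/r,\nu\rangle A$ and $\Delta_M r=\tfrac1r\big(n-1+\langle x/r,\nu\rangle^2\big)+\langle x/r,\nu\rangle H$ for the restricted extrinsic distance (here $x$ is the position vector, $r=|x|$). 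For a $\tilde g$-orthonormal pair $\tilde v=rv,\ \tilde w=rw$ this has the shape
\[
\widetilde{biRic}_{\alpha}(\tilde v,\tilde w)=P_\alpha(b;v,w)+r^2\,\biRic^{(M,g)}_{\alpha}(v,w)+\langle x,\nu\rangle\cdot(\text{terms linear in }A)+(1+\alpha)H\langle x,\nu\rangle ,
\]
where $b:=\langle x/r,\nu\rangle^2\in[0,1]$, the support function satisfies $\langle x,\nu\rangle^2=r^2 b$, and $P_\alpha$ is a universal polynomial. A direct minimization gives $\min_{v,w}P_\alpha(b;v,w)=c_n(\alpha)+\kappa_n(\alpha)b$ with $c_n(\alpha)>0$ for every $\alpha>0$ and every $b\in[0,1]$; the term $r^2\,\biRic^{(M,g)}_{\alpha}(v,w)$ is bounded below by $-C_n(\alpha)\,r^2|A|^2$ by the Gauss-equation algebra (the $\biRic$ analogue of Proposition~\ref{ricquass}, using $H^2\le n|A|^2$ for the $H$-terms it contains); and the remaining two groups are absorbed by Cauchy--Schwarz since $|\langle x,\nu\rangle|=r\sqrt b$ and $|H|\le\sqrt n|A|$. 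The outcome is a pointwise estimate
\[
\widetilde{biRic}_{\alpha}\ \ge\ c_n(\alpha)\ +\ \kappa_n(\alpha)\,b\ -\ C_n(\alpha)\,r^2|A|^2\qquad\text{on }M\setminus B_{R_0}
\]
(constants renamed after the Cauchy--Schwarz steps).

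Next I would transplant stability. With $w=r^{(2-n)/2}$ and $\psi=w\varphi$, the identity $\int|\nabla(w\varphi)|^2=\int w^2|\nabla\varphi|^2-\int w\,\Delta w\,\varphi^2$ together with the formula for $\Delta_M r$ gives
\[
\int_M r^{2-n}|\nabla\varphi|^2\,dV_g\ \ge\ \int_M\Big(r^{2-n}|A|^2\varphi^2-\tfrac{(n-2)^2}{4}\,r^{-n}\varphi^2-\tfrac{n^2-4}{4}\,b\,r^{-n}\varphi^2-\tfrac{n-2}{2}\,H\,r^{1-n}\langle x/r,\nu\rangle\varphi^2\Big)\,dV_g .
\]
Substituting this and the pointwise bound for $\widetilde{biRic}_{\alpha_n}$ into the reformulated goal, and bounding the last ($H$-linear) integral with $|H|\le\sqrt n|A|$ and Cauchy--Schwarz, reduces everything to the nonnegativity of a fixed linear combination of $\int r^{2-n}|A|^2\varphi^2$, $\int r^{-n}\varphi^2$ and $\int b\,r^{-n}\varphi^2$, whose coefficients depend only on $n,a_n,\delta_n,c_n(\alpha_n),\kappa_n(\alpha_n),C_n(\alpha_n)$ (and on $H,R_0$ through the $H$-absorptions, but harmlessly since $R_0$ may be taken large and since, when $H\neq0$, one also has $|A|^2\ge H^2/n$; when $H=0$ there is no such dependence). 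The positive $c_n(\alpha_n)$- and $\kappa_n(\alpha_n)$-terms must dominate the fixed errors $\tfrac{(n-2)^2}{4},\tfrac{n^2-4}{4}$, and the $r^2|A|^2$-terms must balance against $C_n(\alpha_n)$; choosing $a_n$ in the resulting admissible interval and $\delta_n>0$ small then finishes the argument.

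The hard part is precisely this last balance, and it is where $3\le n\le5$ enters: it demands the \emph{sharp} algebraic inequality $\biRic^{(M,g)}_{\alpha}\ge-C_n(\alpha)|A|^2$ — the optimization over second fundamental forms, in the spirit of \cite{shenyingyerugang,chodosh-li-minter-stryker-R5} — and a choice of $\alpha_n>0$ making $C_n(\alpha_n)$ small enough relative to $c_n(\alpha_n)$, $\kappa_n(\alpha_n)$ and the constants $\tfrac{(n-2)^2}{4},\tfrac{n^2-4}{4}$. This system of inequalities closes exactly in the range $3\le n\le5$ and fails for $n\ge6$, just as in the stable Bernstein problem; the additional mean-curvature bookkeeping is the purely technical cost of the CMC setting.
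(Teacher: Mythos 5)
Your outline follows the paper's route in all structural respects: finite index gives stability of $M\setminus B_{R_0}$, the Gulliver--Lawson metric $\tilde g=r^{-2}g$ and the substitution $\varphi=r^{\frac{2-n}{2}}\psi$ transplant the stability inequality, the conformal curvature formula and the Gauss equation express $\widetilde{\operatorname{biRic}}_\alpha$ in terms of $A$, $H$, $|dr|^2$ and $\langle x,\nu\rangle$, and one then has to choose $a_n,\alpha_n,\delta_n$ so that everything is nonnegative. The gap is in the one step that carries all the content: how the terms involving the second fundamental form are absorbed. You propose to (i) bound $r^2\operatorname{biRic}^{(M,g)}_{\alpha}\ge -C_n(\alpha)\,r^2|A|^2$ by a separate algebraic lemma, (ii) absorb the $\langle x,\nu\rangle$-terms that are \emph{linear} in $A$ by scalar Cauchy--Schwarz, and (iii) treat the $(1+\alpha)H\langle x,\nu\rangle$ term via $|H|\le\sqrt n\,|A|$. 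The paper does none of this splitting: it writes $H=\sum_i A_{ii}$ exactly, collects \emph{all} linear terms into a single linear form $\bar B^T x$ in the diagonal entries, and completes the square jointly against the full quadratic form $x^TAx$ whose matrix \eqref{matrix_A} already contains the stability contribution $a\sum_i A_{ii}^2$, yielding the optimal remainder $-\tfrac14\bar B^TA^{-1}\bar B=:b$ in \eqref{biric2}. Your scheme effectively replaces the $A^{-1}$-weighted norm of $\bar B$ by something like $|\bar B|^2/(a-C_n(\alpha))$, which is strictly larger unless $\bar B$ happens to align with the top eigenvector of $A^{-1}$, and step (iii) introduces a further avoidable loss.

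Why this matters: at $n=5$ the numerics in \eqref{cond1} are razor thin. The paper must take $a=\tfrac{111}{100}$, $\alpha=\tfrac{93}{100}$ (so that $A=a\,\mathrm{Id}+S$ is barely positive definite), and even with the optimal completed-square constant $b$ the surviving margin is only $\delta=\tfrac3{50}$; the quantity $2\big(n-1+\alpha(n-2)\big)-\tfrac{n(n-2)}2a$ is almost entirely consumed by $b$. A coarser absorption of the linear terms — in particular one weighted by $\lambda_{\min}(A)^{-1}$ rather than by $A^{-1}$, plus the extra $|H|\le\sqrt n|A|$ loss — will very plausibly fail to leave any positive $\delta_5$, and you give no explicit $a_5,\alpha_5,\delta_5$ nor any verification that your "system of inequalities closes exactly in the range $3\le n\le5$." That closing statement is exactly the proposition to be proved (and is the sole reason the main theorem reaches $\mathbb{R}^6$), so asserting it without the joint optimization and the numerical check leaves the proof incomplete; for $n=3,4$ there is enough slack that your split estimates may well go through, but for $n=5$ you would have to redo the paper's completion of the square (or an equivalent matrix-weighted Cauchy--Schwarz) and exhibit admissible constants.
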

\begin{proof}
    For convenience, we denote $a_{n}$ by $a$ and also denote $\alpha_{n}$ by $\alpha$.
    Firstly as discussed above, there exists a constant $R_0>0$ such that $M\setminus B_{R_0}$ is stable. The constant $R_0$ depends on the dimension $n$ and the index.  Then
    \begin{equation}
    \int_{M}|\nabla \varphi|_{g}^2 \geq \int_{M} |A|^2 \varphi^2 .
    \end{equation}
    for $\varphi\in C_c^1(M\setminus B_{R_0})$.
    Thus,
    
    \begin{equation}\label{inequality involving biric}
    \begin{aligned}
        &\int_M|\nabla\varphi|^2_{g}+\frac{1}{a}\operatorname{biRic}_{\alpha}\varphi^2 \geq\int_M \left(|A|^2+\frac{1}{a}\operatorname{biRic}_{\alpha}\right)\varphi^2 .
    \end{aligned}
    \end{equation}

 Let $\{e_{i}\}_{1\leq i\leq n}$ be orthonormal basis with respect to the metric $g$. Now we compute the $\text{biRic}_{\alpha}$. By Gauss equation, we have
    \begin{equation}\label{biric}
    \begin{aligned}
        &\text{biRic}_{\alpha}(e_{1},e_{2}) 
        =\sum_{i=2}^{n}R_{1i1i}+\alpha \sum_{j=3}^{n}R_{2j2j}\\
        &=\sum_{i=2}^{n}(A_{11}A_{ii}-A_{1i}^2)+\alpha \sum_{j=3}^{n}(A_{22}A_{jj}-A_{2j}^2)
    \end{aligned}
    \end{equation}

   We take 
    \begin{equation}   \label{matrix_A}
    P=
    \begin{pmatrix}
     a & \frac{1}{2} & \frac{1}{2} & \cdots & \frac{1}{2} \\
     \frac{1}{2} & a & \frac{\alpha}{2} & \cdots & \frac{\alpha}{2} \\
     \frac{1}{2} & \frac{\alpha}{2} & a &\cdots & 0\\
     \vdots & \vdots & \vdots & \ddots & \vdots\\
     \frac{1}{2} & \frac{\alpha}{2} & 0 & \cdots & a
    \end{pmatrix}, \qquad x=\begin{pmatrix}
        A_{11}\\
        A_{22}\\
        A_{33}\\
        \vdots\\
        A_{nn}
    \end{pmatrix}
    \end{equation}
    
    We claim $a,\ \alpha$ satisfy:
    \begin{equation}\label{cond1}
        \left\{\begin{array}{cc}
        2a\geq \alpha \\
        2a\geq 1\\
        P>0.
        \end{array}\right.
    \end{equation}
    In fact,
    
    \textbf{Case 1:} $n=5, a=\frac{111}{100}, \alpha = \frac{93}{100}$ satisfy \eqref{cond1}. Since $\operatorname{det}(P)= \frac{1}{4} a^2 (4 a^3 + 3 \alpha - a (4 + 3 \alpha^2)) =\frac{11586754647}{40000000000}>0$ and $\operatorname{det}(C)= 
    a^4 - \frac{3 a^2 \alpha^2}{4} =\frac{287535177}{400000000}>0$ where $C$ is the submatrix of $P$ with column 2 to column $n$ and row 2 to row $n$, then $P$ is positive definite.

\vskip.2cm
    \textbf{Case 2:} $n=4, a=\frac{11}{10}, \alpha = 1$ satisfy \eqref{cond1}. Since $\operatorname{det}(P)=-\frac{3 a^2}{4} + a^4 + \frac{a \alpha}{2} - \frac{a^2 \alpha^2}{2}=\frac{627}{1250}>0$ and $\operatorname{det}(C)= a^3 - \frac{a \alpha^2}{2}=\frac{781}{1000}>0$ where $C$ is the submatrix of $P$ with column 2 to column $n$ and row 2 to row $n$, then $P$ is positive definite.
    
\vskip.2cm

    \textbf{Case 3:} $n=3, a=1, \alpha = 1$ satisfy \eqref{cond1}. Since $\operatorname{det}(P)=-\frac{a}{2} + a^3 + \frac{\alpha}{4} - \frac{a\alpha^2}{4}=\frac{1}{2}>0$ and $\operatorname{det}(C)=a^2-\frac{\alpha^2}{4}=\frac{3}{4}>0$ where $C$ is the submatrix of $P$ with column 2 to column $n$ and row 2 to row $n$, then $P$ is positive defintie.
    
   \vskip.2cm

    Since $P>0$, then $\exists\ \delta(P)>0$, such that $P\geq \delta(P)I$.
    Thus, we have 
    \begin{equation}\label{estimate about biric}
    \begin{aligned}
        &a|A|^2+ \operatorname{biRic}_\alpha(e_{1},e_{2}) \\
        &=a|A|^2+\sum_{i=2}^{n}(A_{11}A_{ii}-A_{1i}^2)+\alpha \sum_{j=3}^{n}(A_{22}A_{jj}-A_{2j}^2)\\
        &=a\sum_{i=1}^{n}A_{ii}^2+\sum_{i=2}^{n}A_{11}A_{ii}+\alpha\sum_{j=3}^{n}A_{22}A_{jj}+a\sum_{i\neq j}A_{ij}^2-\sum_{i=2}^{n}A_{1i}^2-\alpha\sum_{j=3}^{n}A_{2j}^2\\ 
        &\geq x^{T}Px \geq \delta(P) \sum_{i=1}^{n}A_{ii}^2\geq \frac{\delta(P)}{n} H^2>0
    \end{aligned}
    \end{equation}
    where we use $2a\geq\alpha$ and $2a\geq 1$ to control non-diagonal elements.
   
    Thus, it follows from \eqref{inequality involving biric} and \eqref{estimate about biric} that there exist $\delta_{n} =\frac{\delta(P)}{n}H^2  >0$, such that 
    \[
    \int_{M}|\nabla \varphi|_{g}^2-\frac{1}{a}(\delta-\text{biRic}_{\alpha})\varphi^2  \geq 0 
    \]
    for any $\varphi \in C_{c}^{1}(M\setminus B_{R_0})$.
\end{proof}

Using Proposition \ref{lower bound for alpha biric} and similar arguments from \cite{Fischer-Colbrie-Schoen-The-structure-of-complete-stable}, there exists a positive function $\omega$ on $M\setminus B_{R_0}$ satisfying certain partial differential equation.

\begin{proposition}
    Assume $(M\setminus B_{R_0},g)$ and $a_{n}, \ \alpha_{n},\ \delta_{n}$ are set as above. Then there is a positive function $\omega$, such that
    \begin{equation}\label{equation omega}
\begin{cases}
    -a_{n}\Delta\omega+\operatorname{biRic}_{\alpha_{n}} \cdot\omega=\delta_{n}\omega, & \text{on}\ M\setminus B_{R_0}\\
    \omega=1, & \text{on}\ \partial (M\setminus B_{R_0}).
\end{cases}\end{equation}
\end{proposition}
\begin{proof}
    For convenience, we denote $a_{n}$ by $a$ and also denote $\alpha_{n}$ by $\alpha$. Consider an exhaustion $\Omega_{1}\subset \Omega_{2}\subset\cdots\subset M\backslash B_{R_{0}}$ by pre-compact regions with smooth boundary and each $\Omega_{i}$ contains $\partial B_{R_{0}}$. Fix $\Omega_{i}$, we consider the following equation
    \begin{equation}\label{equation stable}
    \begin{cases}
        -\Delta \omega_{i} + \frac{1}{a}(\operatorname{biRic}_{\alpha}-\delta)\omega_{i} = 0, & \text{on}\ \Omega_{i}\\
        \omega_{i} = 0 , & \text{on}\ \partial\Omega_{i}\backslash\partial B_{R_{0}}\\
        w_{i} = 1, & \text{on} \ \partial B_{R_{0}}.
    \end{cases}
    \end{equation}
    If the following homogeneous equation \eqref{equation stable 2} has only trivial solution, then the existence of equation \eqref{equation stable} is guaranteed  by Ferdholm alternative.
    \begin{equation}\label{equation stable 2}
    \begin{cases}
        -\Delta v_{i} + \frac{1}{a}(\operatorname{biRic}_{\alpha}-\delta)v_{i} = 0, & \text{on}\ \Omega_{i}\\
        v_{i} = 0 , & \text{on}\ \partial\Omega_{i}.
    \end{cases}
    \end{equation}
    In fact, multiply equation \eqref{equation stable 2} with $v_{i}$ and integrate on $\Omega_{i}$, we have
    \begin{equation}
    \begin{aligned}
        0=\int_{\Omega_{i}} |\nabla v_{i}|^2+\frac{1}{a}(\delta-\operatorname{biRic_{\alpha}})v_{i}^2\geq \lambda_{1}(\Omega_{i})\int_{\Omega_{i}}v_{i}^2.
    \end{aligned}    
    \end{equation}
    By the domain monotonicity formula $\lambda_{1}(\Omega_{i})>0$, then $v_{i}=0$.

    Next, we need to show $\omega_{i}> 0$. Since the strong Maximum principle, it suffices to show that $\omega_{i}\geq 0$. If it fails, then $V=\{x\in\Omega_{i}|\ \omega_{i}<0\}\neq \emptyset$. Consider the negative part of $\omega_{i}$ denoted by $\omega_{i}^{-}$. Then $-\Delta \omega_{i}^{-} + \frac{1}{a}(\operatorname{biRic}_{\alpha}-\delta)\omega_{i}^{-}=0$ on $V$ and $\omega_{i}^{-} = 0$ in $\partial V$. According to $\lambda_{1}(V)>0$, we have $\omega_{i}^{-}=0$ on $V$. It is a contradiction with $V\neq \emptyset$.
    
    According to the Harneck inequality, on any compact set $K$ containing $\partial B_{R_{0}}$ we have
    \[
        \sup_{K} \omega_{i} \leq C(K) \inf_{K}\omega_{i}\leq C(K)
    \]
    where $\partial B_{R_{0}}\subset K$ implies $\inf_{K}w_{i}\leq 1$. Thus, $\omega_{i}$ have a universal upper bound $C(K)$ on any compact set $K$ containing $\partial B_{R_{0}}$. By standard Schauder estimate, $|\omega_i|_{C^{2,\gamma}(K)}\leq C(K)$. Using Arzelà-Ascoli lemma and a diagonal argument, up to a subsequence, we have that $\omega_{i}\rightarrow \omega_{\infty}$ in $C_{loc}^{2}(M\backslash B_{R_{0}})$. Then $\omega_{\infty}$ is $C^{2}$ and satisfies the equation \eqref{equation omega}. 
    
    Finally, we show $\omega_{\infty}>0$. Since $\omega_{i}>0$, it follows that $\omega_{\infty}\geq 0$. According to the strong Maximum principle, we have either $\omega_{\infty}>0$ or $\omega_{\infty}=0$, the latter is impossible because $\omega_{\infty}|_{\partial B_{R_{0}}}=1$.
\end{proof}
We introduce the warped $\mu$-bubble. Let $N$ be a domain with boundary contained in $(M\setminus B_{R_0},g)$ and $\partial N = \partial_{-}N\cup \partial_{+}N$. Choose $h$ to be a smooth function on $\mathring{N}$ such that $h\rightarrow \pm\infty$ on $\partial_{\pm}N$. Choose a Caccioppolli set $\Omega_{0}$ with $\partial\Omega_{0}\subset\mathring{N}$ and $\partial_{+}N\subset \Omega_{0}$. We consider the following functional
\[
    \mathcal{A}(\Omega) = \int_{\partial^*\Omega}w^{a_{n}}d\mathcal{H}^{n-1}- \int_{M}(\chi_{\Omega}-\chi_{\Omega_{0}})hw^{a_{n}}d\mathcal{H}^{n}.
\]
We call $\Sigma=\partial^*\Omega$ a warped $\mu$-bubble, where $\Omega$ minimizes $\mathcal{A}$ over all sets of finite perimeter containing $\partial_+ N$. Let $\eta$ be the unit normal vector of $\Sigma$ pointing outside of $\Omega$. 

The following proposition is essentially from \cite[Section 4]{mazet}. For the sake of convenience of readers, we include it here and check that $a_{n}$, $\alpha_{n}$ above satisfy the necessary condition.

\begin{proposition}\label{bubble}
    Assume $(M\setminus B_{R_0},g)$ and $a_{n}, \ \alpha_{n}$ are set as above. Let $N\subset E_{i}$ be a domain with boundary where $E_{i}$ is one of the connected component of $M\setminus B_{R_0}$. If there exist a point $p\in N$ s.t. $\operatorname{dist}_{g}(p,\partial N)\geq C(a_{n},\delta_{n})>0$. Then there is a relatively open subset $\Omega$ containing $\partial N$ with $\Omega \subset B_{C(a_{n},\delta_{n})}(\partial N)$  such that on each connected component, denoted by $\Sigma$, of $\partial\Omega\setminus \partial N$, there holds
    \begin{equation}\label{ric1}
    \begin{aligned}
    \frac{4}{4-a_{n}}\int_{\Sigma}|\nabla_{\Sigma} \varphi|^2 \geq \int_{\Sigma}(\frac{\delta_{n}}{2}-\alpha_{n} \operatorname{Ric}_{\Sigma})\varphi^2    
    \end{aligned}
    \end{equation}
    for any $\varphi \in C^{1}(\Sigma)$, where $B_{C(a,\delta)}(\partial N)$ means the $C(a,\delta)$-neighborhood of $\partial N$ with respect to the metric $g.$
\end{proposition}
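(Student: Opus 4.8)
The plan is to realize $\Sigma$ as a connected component of the free boundary of a warped $\mu$-bubble for the weight $\omega^{\alpha}$ on a collar of $\partial N$, with a prescribing function $h$ whose blow-up locus sits at $\tilde g$-distance $C(a,\delta)$ inward from $\partial N$, and then to convert the second-variation inequality recorded just before the statement into the spectral Ricci bound \eqref{ric1} by invoking the positivity of the matrix $G$ of \eqref{cond2}, already verified in Cases~1--3.

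\textbf{Step 1: the prescribing function and the constant $C(a,\delta)$.} Let $\rho(x)=\operatorname{dist}_{\tilde g}(x,\partial N)$, which is $1$-Lipschitz for $\tilde g$, and work in the collar $N'=\{x\in N:0<\rho(x)<L\}$ for an $L$ to be determined. Choose $h=h(\rho)$ depending on $\rho$ alone, with $h\to+\infty$ as $\rho\to0^{+}$ and $h\to-\infty$ as $\rho\to L^{-}$, solving the separable ODE $a\,h'(\rho)=-\big(\tfrac{\delta}{2}+\beta h(\rho)^{2}\big)$ with $\beta=\tfrac{1}{22}$; its solution sweeps from $+\infty$ to $-\infty$ over an interval of length $L=a\pi\big(\tfrac{2}{\beta\delta}\big)^{1/2}$, and this fixes $C(a,\delta):=L$. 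Since $|\tilde\nabla\rho|\le1$ and $h'\le0$, for every unit vector $\eta$ one has $dh(\eta)=h'(\rho)\,d\rho(\eta)\ge h'(\rho)=-\tfrac{1}{a}\big(\tfrac{\delta}{2}+\beta h^{2}\big)$, hence $\tfrac{\delta}{2}+\beta h^{2}+a\,dh(\eta)\ge0$ pointwise on $N'$. The hypothesis that some $p\in N\setminus B_{R_0}$ satisfies $\operatorname{dist}_{\tilde g}(p,\partial N)\ge C(a,\delta)$ is what guarantees that a collar of $\partial N$ of width $C(a,\delta)$ fits inside $(M\setminus B_{R_0})\cap N$, the region where Proposition~\ref{lower bound for alpha biric} and the equation $-a\tilde\Delta\omega+\widetilde{\operatorname{biRic}}_{\alpha}\cdot\omega=\delta\omega$, $\omega|_{\partial(M\setminus B_{R_0})}=1$, are available.

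\textbf{Step 2: the warped $\mu$-bubble.} Put $\partial_{+}N'=\partial N$ (where $h\to+\infty$) and $\partial_{-}N'=\{\rho=L\}\cap N$, fix a Caccioppoli set $\Omega_0$ containing a collar neighborhood of $\partial_{+}N'$, and minimize the warped-area functional $\mathcal{A}$ introduced above over sets of finite perimeter containing $\partial_{+}N'$. The blow-up of $h$ on $\partial_{\pm}N'$, with the level sets of $\rho$ as barriers, confines any minimizer $\Omega$ to the open collar, so the region it cobounds with $\partial N$ lies in $B_{C(a,\delta)}(\partial N)$; since $\omega^{\alpha}$ is smooth and positive and $\dim\Sigma=n-1\le4<7$, standard existence and interior regularity for prescribed-mean-curvature functionals produce a minimizer whose reduced boundary $\Sigma=\partial^{*}\Omega$ is a smooth closed hypersurface disjoint from $\partial N$, possibly with several components. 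The first variation of $\mathcal{A}$ gives the prescribed-mean-curvature equation recorded above, $H^{\Sigma}=h-a\,d\ln\omega(\eta)$ with $\eta$ the outward unit normal of $\Omega$, and the second variation, computed exactly as in \cite[Section~4]{mazet} and using the equation for $\omega$ to convert the $\alpha$-biRic estimate of Proposition~\ref{lower bound for alpha biric} into a $\delta-\alpha\widetilde{\operatorname{Ric}}_{\Sigma}$ contribution, yields precisely the stability inequality displayed immediately before the statement of this proposition.

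\textbf{Step 3: algebraic reduction and conclusion.} Substituting $H^{\Sigma}=h-a\,d\ln\omega(\eta)$ into that inequality and isolating the curvature-free terms, one must bound $|B|^{2}+\alpha H^{\Sigma}B_{11}-\alpha\sum_{j}B_{1j}^{2}+a(d\ln\omega(\eta))^{2}+aH^{\Sigma}d\ln\omega(\eta)$ below by $\beta h^{2}$. Treating this by the usual trace/traceless decomposition of the second fundamental form $B$ of $\Sigma$ — the off-diagonal terms $\sum_{j\ge2}B_{1j}^{2}$ enter with a nonnegative coefficient (since $\alpha\le1<2$) and are discarded, and the trace part of $B$ is estimated below by Cauchy--Schwarz — reduces the claim to positivity of a quadratic form in $\big(B_{11},\ \text{trace part of }B,\ d\ln\omega(\eta)\big)$ whose matrix is exactly $G$ of \eqref{cond2}. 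Since $G>0$ in Cases~1--3, the desired lower bound holds; feeding it into the stability inequality yields \eqref{inequalityonbubblewith-h}, and since $\tfrac{\delta}{2}+\beta h^{2}+a\,dh(\eta)\ge0$ by Step~1 the corresponding summand there is nonnegative and may be discarded, leaving \eqref{ric1}. The set $\Omega$ of the statement is the region cobounded by $\partial N$ and $\Sigma$; it lies in $B_{C(a,\delta)}(\partial N)$ by Step~2.

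\textbf{Anticipated main difficulty.} There is no single hard estimate; the delicate parts are, first, the second-variation computation — one must track the interaction of $\omega$ (defined via the $\alpha$-biRic test), $h$, and the splitting of $B$ carefully enough that, after using both the equation for $\omega$ and $H^{\Sigma}=h-a\,d\ln\omega(\eta)$, all cross terms assemble precisely into $G$ with nothing left over — and second, making the confinement and interior regularity of the $\mu$-bubble rigorous, which is routine for $n\le5$ but is exactly where the dimension restriction enters, in tandem with the numerology already checked in \eqref{cond1}--\eqref{cond2}. The ODE for $h$ and the positivity of $G$ are elementary or already done.
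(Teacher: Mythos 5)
Your proposal is correct and follows essentially the same route as the paper: the paper also obtains \eqref{ric1} by running the warped $\mu$-bubble argument of Mazet with weight $\omega^{\alpha}$ on a collar of $\partial N$, reducing the pointwise estimate of the second-fundamental-form and $d\ln\omega(\eta)$ terms to the positivity of the matrix $G$ in \eqref{cond2} (verified in Cases 1--3), and choosing $h=h(\rho)$ so that $\tfrac{\delta}{2}+\beta h^{2}+a\,dh(\eta)\geq 0$, which fixes the width $C(a,\delta)$ exactly as in your ODE construction. The only differences are presentational (the paper leaves the choice of $h$ and the confinement/regularity of the minimizer implicit, citing \cite[Section 4]{mazet}), not mathematical.
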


\begin{proof}
For convenience, we denote $a_{n}$ by $a$ and also denote $\alpha_{n}$ by $\alpha$. Computing the first variation and the second variation of $\mathcal{A}$, we obtain
\[
0=\frac{d}{dt}\mathcal{A}(\Omega_{t})|_{t=0} = \int_{\Sigma}(H_{\Sigma}+a(d\ln \omega) (\eta)-h)\omega^{a}\varphi
\]
and
\[
\begin{aligned}
    0\leq \frac{d^2}{dt^2}\mathcal{A}(\Omega_{t})|_{t=0} = \int_{\Sigma}w^{a}\Big(-\varphi \Delta_{\Sigma} \varphi-(|B|^2+\text{Ric}_{N}(\eta,\eta))\varphi^2-a\omega^{-2}d\omega(\eta)^2\varphi^2\\+a\omega^{-1}\nabla_{N}^{2}\omega(\eta,\eta)\varphi^2-a\omega^{-1}\langle\nabla_{\Sigma}\omega,\nabla_{\Sigma}\varphi\rangle\varphi-dh(\eta)\varphi^2\Big)
\end{aligned}
\]
where $B$ is the second fundamental form of $\Sigma$.
Using integration by parts and $\nabla^2_{N}\omega(\eta,\eta) = \Delta_{N}\omega - \Delta_{\Sigma} \omega -H_{\Sigma} d\omega(\eta)$, 
we have
\[
\begin{aligned}
    0\leq\int_{\Sigma}w^{a}\Big(|\nabla_{\Sigma}\varphi|^2-(|B|^2+\text{Ric}_{N}(\eta,\eta))\varphi^2-a\omega^{-2}d\omega(\eta)^2\varphi^2\\+a\omega^{-1}(\Delta_{N}\omega-\Delta_{\Sigma}\omega-H_{\Sigma}d\omega(\eta))\varphi^2-dh(\eta)\varphi^2\Big)
\end{aligned}
\]

Taking $\varphi = \omega^{-\frac{a}{2}}\psi$
and using that $\omega$ satisfies the equation \eqref{equation omega} and the Gauss equation, we get the following inequality
\[
\begin{aligned}
    &\int_{\Sigma}|\nabla_{\Sigma}\psi|^2+a\omega^{-1}\psi\langle\nabla_{\Sigma}\omega,\nabla_{\Sigma}\psi\rangle-(a-\frac{a^2}{4})\psi^2\omega^{-2}|\nabla_{\Sigma}\omega|^2
    \geq \int_{\Sigma}\psi^2\Big(\delta-\alpha \text{Ric}_{\Sigma}\\
    &+|B|^2+\alpha H_{\Sigma} B_{11}-\alpha \sum_{j=1}^{n-1}B_{1j}^2+a(d \ln \omega(\eta))^2+a H_{\Sigma} (d\ln \omega)(\eta)+ a dh(\eta)\Big).
\end{aligned}
\]
Then we use $\omega^{-1}\psi\langle\nabla_{\Sigma}\omega,\nabla_{\Sigma}\psi\rangle\leq \frac{1}{4-a}|\nabla_{\Sigma}\psi|^2+\frac{4-a}{4}\psi^2\omega^{-2}|\nabla_{\Sigma}\omega|^2$ to control the left hand side, we have
\[
\begin{aligned}
    \frac{4}{4-a}\int_{\Sigma}|\nabla_{\Sigma}\psi|^2
    &\geq \int_{\Sigma}\psi^2\Big(\delta-\alpha \text{Ric}_{\Sigma}+|B|^2+\alpha H_{\Sigma} B_{11}-\alpha \sum_{j=1}^{n-1}B_{1j}^2 \\
    &+a(d \ln \omega(\eta))^2+a H_{\Sigma} (d\ln \omega)(\eta)+ a dh(\eta)\Big).
\end{aligned}
\]

If we can show 
\begin{equation}\label{inequation about B}
    |B|^2 + \alpha H_{\Sigma} B_{11} -\alpha \sum_{j=1}^{n-1}B_{1j}^2 + a(d \ln \omega(\eta))^2 + a H_{\Sigma} (d\ln \omega)(\eta) \geq \beta h^2 ,
\end{equation}
then
\begin{equation}\label{inequalityonbubblewith-h}
     \frac{4}{4-a}\int_{\Sigma}|\nabla_{\Sigma}\psi|^2 \geq \int_{\Sigma}\psi^2(\frac{\delta}{2}-\alpha \text{Ric}_{\Sigma})+\psi^2(\frac{\delta}{2}+\beta h^2+adh(\eta)).
\end{equation}   
It is not hard to construct $h$ such that $\frac{\delta}{2}+\beta h^2+adh(\eta)\geq 0$. Then we finish the proof of the Proposition. It suffices to show \eqref{inequation about B}.
Using $H_{\Sigma} = h - a(d\ln \omega)(\eta)$, it is equivalent to show 
\[
|B|^2+\alpha H_{\Sigma}B_{11}-\alpha \sum_{j=1}^{n-1}B_{1j}^2+\frac{1}{a}(H_{\Sigma}-h)^2+H_{\Sigma}(h-H_{\Sigma})\geq \beta h^2.
\]
Denote $\Phi_{ij} = B_{ij} - \frac{H_{\Sigma}}{n-1}\delta_{ij}$. Then it suffices to show 
\[
\begin{aligned}
    \frac{1}{n-1}H_{\Sigma}^2+\frac{n-1}{n-2}\Phi_{11}^2+\frac{\alpha}{n-1}H_{\Sigma}^2+\alpha H_{\Sigma}\Phi_{11}-\alpha(\frac{1}{n-1}H_{\Sigma}+\Phi_{11})^2\\+\frac{1}{a}(H_{\Sigma}-h)^2+H_{\Sigma}(h-H_{\Sigma})\geq \beta h^2
\end{aligned}
\]
where we use $|\Phi|^2\geq \frac{n-1}{n-2}\Phi_{11}^2$ because $\Phi$ is trace free. We also use $\alpha\leq2$ to control non-diagonal elements. Denote   
\begin{equation}\label{cond2}
G=\begin{pmatrix}
    \frac{1}{n-1}+\frac{\alpha}{n-1}-\frac{\alpha}{(n-1)^2}+\frac{1}{a}-1 & \frac{\alpha}{2}\sqrt{\frac{n-2}{n-1}}(1-\frac{2}{n-1}) & \frac{1}{2}-\frac{1}{a} \\
    \frac{\alpha}{2}\sqrt{\frac{n-2}{n-1}}(1-\frac{2}{n-1}) & 1-\alpha\frac{n-2}{n-1} & 0\\
    \frac{1}{2}-\frac{1}{a} & 0 & \frac{1}{a}-\beta 
\end{pmatrix} .
\end{equation}
The above inequality is a quadratic form in $(H_{\Sigma},\sqrt{\frac{n-1}{n-2}}\Phi_{11},h)$ w.r.t. $G$. Thus it is equal to show $G>0$. 
In fact, 

\textbf{Case 1:} $n=5, a=\frac{111}{100}, \alpha = \frac{93}{100},\ \beta = \frac{1}{22}$. In this case, we have $1-\alpha\frac{n-2}{n-1}=\frac{121}{400} >0$ and
\[
    \operatorname{det}(G) = \frac{113639}{130240000}>0.
\]

\textbf{Case 2:} $n=4, a=\frac{11}{10}, \alpha=1,\ \beta = \frac{1}{22}$. In this case,  $1-\alpha\frac{n-2}{n-1}=\frac{1}{3}>0$ and
\[
    \operatorname{det}(G) = \frac{15}{242}>0.
\]

\textbf{Case 3:} $n=3, a=1, \alpha=1,\ \beta = \frac{1}{22}$. In this case, we have  $1-\alpha\frac{n-2}{n-1}=\frac{1}{2}>0$ and
\[
    \operatorname{det}(G) = \frac{41}{176}>0.
\]
Hence $G$ is positive definite. Thus, we are able to construct a warped $\mu$-bubble in a domain $N$ by using the weight function $\omega$. Moreover, the warped $\mu$-bubble has a spectral Ricci curvature lower bound. 
\end{proof}

\section{Proof of the main theorem}\label{section4}
In this section, we provide a proof of the main theorem. First, we need the following Lemma.
\begin{lemma}[\cite{chodoshliR4}]\label{boundary}
    Suppose that $M$ has $k$ ends and $H^{1}_{c}(M;\mathbb{R})$ has finite dimension.
    Consider an exhaustion $\Omega_{1}\subset \Omega_{2}\subset\cdots\subset M$ by pre-compact regions with smooth boundary. If each component of $M\backslash\Omega_{i}$ is unbounded. Then for $i$ sufficiently large, $\partial \Omega_{i}$ has $k$ components. 
\end{lemma}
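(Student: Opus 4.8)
The plan is to translate the statement into a count of connected components carried out near infinity, using the finiteness of $H^1_c(M;\mathbb{R})$ as the one quantitative input. First fix $R_0$ so that $M\setminus B_{R_0}(p)$ has exactly $k$ components, all unbounded. For $i$ large we have $\Omega_i\supseteq B_{R_0}(p)$, so every component of $M\setminus\Omega_i$ is unbounded (by hypothesis) and lies in one of the $k$ model ends of $M\setminus B_{R_0}(p)$, each of which contributes at least one such component; since moreover the number of (necessarily unbounded) components of $M\setminus\Omega_i$ is nondecreasing in $i$ with supremum $k$, I conclude that for $i$ large $M\setminus\Omega_i$ has exactly $k$ components $U^{(i)}_1,\dots,U^{(i)}_k$, each connected with a single end. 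Because $\partial\Omega_i$ is a smooth closed two-sided hypersurface, each of its points is adjacent to exactly one $U^{(i)}_j$, so $\partial\Omega_i=\bigsqcup_{j}\partial U^{(i)}_j$ with each $\partial U^{(i)}_j\neq\emptyset$; hence $\#\pi_0(\partial\Omega_i)=\sum_j\#\pi_0(\partial U^{(i)}_j)\ge k$, and the lemma reduces to showing each $\partial U^{(i)}_j$ is connected for $i$ large.

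The quantitative step uses Poincaré–Lefschetz duality. Write $M=\Omega_i\cup V_i$ with $V_i:=M\setminus\mathring\Omega_i$, a noncompact manifold with $\partial V_i=\partial\Omega_i$ whose $k$ components are all noncompact. From the exact sequence of the pair (and a collar, for excision) one gets $\dim H_n(M,\partial\Omega_i;\mathbb{R})\le \dim H_n(\Omega_i,\partial\Omega_i;\mathbb{R})+\dim H_n(V_i,\partial\Omega_i;\mathbb{R})$, and Lefschetz duality identifies $H_n(\Omega_i,\partial\Omega_i)\cong H^0(\Omega_i)=\mathbb{R}$ (taking, as we may, the $\Omega_i$ connected) and $H_n(V_i,\partial\Omega_i)\cong H^0_c(V_i)=0$. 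Thus $\dim H_n(M,\partial\Omega_i;\mathbb{R})\le 1$, so in the long exact sequence of $(M,\partial\Omega_i)$ the kernel of $H_{n-1}(\partial\Omega_i;\mathbb{R})\to H_{n-1}(M;\mathbb{R})$ is at most one-dimensional; since $H_{n-1}(M;\mathbb{R})\cong H^1_c(M;\mathbb{R})$ by Poincaré duality (for nonorientable $M$ use the orientation local system — in the application $M$ is two-sided, hence orientable), we obtain $\#\pi_0(\partial\Omega_i)\le 1+\dim H^1_c(M;\mathbb{R})$. In particular the integer sequence $c_i:=\#\pi_0(\partial\Omega_i)$ is bounded, with $k\le c_i\le 1+\dim H^1_c(M;\mathbb{R})$ for all large $i$.

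It remains to improve ``bounded'' to ``eventually equal to $k$'', and this is the delicate point where I expect the main obstacle. Suppose, for contradiction, that $\partial U^{(i)}_j$ had at least two components for some $j$ and infinitely many $i$. For such $i$, any boundary component $\Gamma$ of that $U^{(i)}_j$ is two-sided and nonseparating in $M$: given a loop meeting $\Gamma$ transversally, reroute each crossing through the connected region $\Omega_i$ and back in through another boundary component of the connected region $U^{(i)}_j$, producing a homologous loop that avoids $\Gamma$ entirely; hence $0\ne[\Gamma]\in H_{n-1}(M;\mathbb{R})\cong H^1_c(M;\mathbb{R})$. One then argues, using that $H^1_c(M;\mathbb{R})=\varinjlim_i H^1(\Omega_i,\partial\Omega_i;\mathbb{R})$ is finite-dimensional so the direct system stabilizes, together with a handle analysis of the cobordisms $\Omega_{i+1}\setminus\mathring\Omega_i$: none of these can cap off a boundary sphere (that would create a bounded component of $M\setminus\Omega_i$, excluded by hypothesis) nor can boundary components ``branch'' permanently, since a net gain of boundary components that is never undone forces either an extra end of some $U_j$ (impossible, since $U_j$ is one-ended) or, via branch-then-merge, infinitely many independent classes at infinity, contradicting the finiteness of $H^1_c(M;\mathbb{R})$. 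Concretely one can track, for each $j$, the class $[E_j]\in H_{n-1}(M;\mathbb{R})$ carried by $\partial U^{(i)}_j$ — independent of $i$ because consecutive such hypersurfaces cobound the compact cobordism $\overline{U^{(i)}_j}\setminus U^{(i')}_j$ — and use that a connected cobordism with at least two incoming and at least two outgoing boundary components contributes nontrivially to $H_1$. It follows that for $i$ large each $\partial U^{(i)}_j$ is connected, i.e.\ $\partial\Omega_i$ has exactly $k$ components. The subtle part, and the one I would flag as the heart of the proof, is exactly this last stabilization argument: ruling out a persistently disconnected ``inner boundary'' of a single end using only the finiteness of $H^1_c$.
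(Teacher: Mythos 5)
The paper itself offers no proof of this lemma (it is quoted from \cite{chodoshliR4}), so your argument has to stand on its own. Its first two thirds are fine: for large $i$ the complement $M\setminus\Omega_i$ has exactly $k$ components, each unbounded and containing a single end; $\partial\Omega_i$ splits as the disjoint union of the $\partial U^{(i)}_j$, giving $\#\pi_0(\partial\Omega_i)\ge k$; and the Lefschetz-duality computation $\dim H_n(M,\partial\Omega_i;\mathbb{R})\le 1$ correctly yields the uniform bound $\#\pi_0(\partial\Omega_i)\le 1+\dim H^1_c(M;\mathbb{R})$. (Minor caveat: "as we may, the $\Omega_i$ connected" is only legitimate because ``region'' should be read as connected; you cannot modify the given exhaustion, since the conclusion is about its boundaries.) The genuine gap is the final step, which you yourself flag: the lemma asserts the count is eventually \emph{equal} to $k$, not merely bounded, and your route to ruling out a persistently disconnected $\partial U^{(i)}_j$ is a list of unproved assertions ("handle analysis of the cobordisms", "branch-then-merge forces infinitely many independent classes at infinity", "a connected cobordism with two incoming and two outgoing boundary components contributes nontrivially to $H_1$"). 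The one precise fact you do establish --- that each component $\Gamma$ of a disconnected $\partial U^{(i)}_j$ is nonseparating, hence $[\Gamma]\neq 0$ in $H_{n-1}(M;\mathbb{R})\cong H^1_c(M;\mathbb{R})$ --- only gives $\dim H^1_c\ge 1$ and produces no contradiction; nothing in the sketch excludes the boundary count sitting at $k+1$ for every $i$. Also note that a nonzero class supported arbitrarily far out is not by itself absurd (the cross-section of a cylindrical end is such a class), so "infinitely many independent classes at infinity" needs an actual mechanism, which is missing.

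The missing mechanism is a stabilization-plus-intersection argument, and once stated it replaces your cobordism discussion entirely. Since $H_{n-1}(M;\mathbb{R})\cong H^1_c(M;\mathbb{R})$ is finite dimensional, the decreasing sequence of subspaces $W_i:=\mathrm{im}\bigl(H_{n-1}(M\setminus\Omega_i;\mathbb{R})\to H_{n-1}(M;\mathbb{R})\bigr)$ stabilizes, say for $i\ge i_1$. Suppose some $U^{(i)}_j$ with $i\ge i_1$ has two boundary components $\Gamma\neq\Gamma'$. Your rerouted loop $\gamma$ (through $U^{(i)}_j$, across $\Gamma'$, back through $\Omega_i$) meets $\Gamma$ transversally in exactly one point, so the intersection pairing of $[\Gamma]$ with $[\gamma]$ is $\pm 1$. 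On the other hand, $[\Gamma]\in W_i=W_{i'}$ for $i'$ so large that the compact loop $\gamma$ lies in $\Omega_{i'}$; hence $[\Gamma]$ is represented by a cycle supported in $M\setminus\Omega_{i'}$, disjoint from $\gamma$, and the pairing vanishes --- a contradiction. This shows every $\partial U^{(i)}_j$ is connected for all large $i$, i.e.\ $\partial\Omega_i$ has exactly $k$ components. So your skeleton is right, but the decisive step must be this (or an equivalent) duality argument, which the proposal does not contain.
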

In fact, the dimension of $H_c^1$ of CMC hypersurfaces in Euclidean space with finite index is finite.
\begin{proposition}\label{finite_H_1}
    Consider a complete noncompact CMC immersion $M^n\hookrightarrow \mathbb{R}^{n+1} $ with finite index. Then $H^{1}_{c}(M;\mathbb{R})$ has finite dimension.
\end{proposition}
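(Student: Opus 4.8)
The plan is to derive finiteness of $\dim H^1_c(M;\mathbb{R})$ from the two facts that Theorem~\ref{finiteend} makes available for $M\hookrightarrow\mathbb{R}^{n+1}$ --- that $M$ has finitely many ends and that $\dim H^1(L^2(M))<\infty$ --- together with the global Sobolev inequality of Proposition~\ref{sobolevineq}. The bridge between the topological group $H^1_c$ and the analytic space $H^1(L^2)$ will be a harmonic--projection map $\psi$, whose image is controlled analytically and whose kernel is controlled topologically.

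First I would record the topological input. Since $M$ is connected and noncompact, $H^0_c(M;\mathbb{R})=0$; and if $M$ has $k$ ends, then $\varinjlim_K H^0(M\setminus K;\mathbb{R})\cong\mathbb{R}^k$ as $K$ exhausts $M$. Applying the (exact) direct limit over $K$ to the long exact sequences of the pairs $(M,M\setminus K)$ yields an exact sequence $0\to H^0(M;\mathbb{R})\to\mathbb{R}^k\to H^1_c(M;\mathbb{R})\to H^1_{\mathrm{dR}}(M)$, in which the first arrow is the diagonal embedding of constants. Hence $\ker\bigl(H^1_c(M;\mathbb{R})\to H^1_{\mathrm{dR}}(M)\bigr)\cong\mathbb{R}^{k-1}$ is finite dimensional --- this is, incidentally, just the input that Lemma~\ref{boundary} consumes.

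Next I would set up the Hodge map. As $M$ is complete, the Kodaira $L^2$ decomposition $L^2\Lambda^1(M)=\mathcal{H}^1(M)\oplus\overline{d\,C_c^\infty(M)}\oplus\overline{\delta\,C_c^\infty(\Lambda^2 M)}$ holds, where $\mathcal{H}^1(M)=H^1(L^2(M))$ is the space of $L^2$ harmonic $1$-forms. A closed, compactly supported $1$-form $\omega$ is square integrable and, since $d\omega=0$, is $L^2$-orthogonal to $\delta\,C_c^\infty(\Lambda^2 M)$; hence $\omega-\pi_{\mathcal{H}}(\omega)\in\overline{d\,C_c^\infty(M)}$, where $\pi_{\mathcal{H}}$ denotes orthogonal projection onto $\mathcal{H}^1(M)$. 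The assignment $\psi\colon H^1_c(M;\mathbb{R})\to\mathcal{H}^1(M)$, $[\omega]\mapsto\pi_{\mathcal{H}}(\omega)$, is well defined (if $\beta\in C_c^\infty(M)$ then $d\beta\perp\mathcal{H}^1(M)$ by integration by parts), and therefore $\dim\im\psi\le\dim\mathcal{H}^1(M)=\dim H^1(L^2(M))<\infty$.

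The hard part is to control $\ker\psi$, and this is where the Sobolev inequality enters essentially. I claim $\ker\psi\subseteq\ker\bigl(H^1_c(M;\mathbb{R})\to H^1_{\mathrm{dR}}(M)\bigr)$. Indeed, if $[\omega]\in\ker\psi$ then $\omega=\lim_i df_i$ in $L^2(M)$ for suitable $f_i\in C_c^\infty(M)$, and Proposition~\ref{sobolevineq} applied to $f_i-f_j$ gives $\|f_i-f_j\|_{L^{2n/(n-2)}(M)}\le C\|df_i-df_j\|_{L^2(M)}\to 0$, so $f_i\to f$ in $L^{2n/(n-2)}(M)$; testing against smooth compactly supported $1$-forms then shows $df=\omega$ in the distributional sense, and elliptic regularity upgrades $f$ to a smooth function, whence $[\omega]=0$ in $H^1_{\mathrm{dR}}(M)$. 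Consequently $\dim\ker\psi\le\dim\ker\bigl(H^1_c(M;\mathbb{R})\to H^1_{\mathrm{dR}}(M)\bigr)=k-1$, and the exact sequence $0\to\ker\psi\to H^1_c(M;\mathbb{R})\to\im\psi\to0$ forces $\dim H^1_c(M;\mathbb{R})\le(k-1)+\dim H^1(L^2(M))<\infty$. The only genuinely delicate point is producing the honest primitive $f$ from the $L^2$-approximating potentials $f_i$, which is exactly where completeness together with the Euclidean ambient geometry --- in the guise of the global Sobolev inequality, hence ultimately the finite-index hypothesis --- is needed.
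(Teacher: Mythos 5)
Your argument is correct, and its skeleton (project a compactly supported closed $1$-form onto the space of $L^2$-harmonic forms, then use the Sobolev inequality of Proposition \ref{sobolevineq} to control what is lost) is exactly what underlies the paper's proof; the difference is in how the kernel of that projection is handled. The paper simply cites Carron's result (\cite[Proposition 2.11]{Carron-L2-harmonic}) that under the Sobolev inequality the natural map $H^1_c(M;\mathbb{R})\to H^1(L^2(M))$ is \emph{injective}, and then concludes by the finite-dimensionality of $H^1(L^2(M))$ from Theorem \ref{finiteend}. You instead prove a weaker statement --- that the kernel of your map $\psi$ lands in $\ker\bigl(H^1_c(M;\mathbb{R})\to H^1_{\mathrm{dR}}(M)\bigr)\cong\mathbb{R}^{k-1}$ --- and compensate with the second conclusion of Theorem \ref{finiteend}, the finiteness of the number $k$ of ends, via the exact sequence $0\to H^0(M)\to\varinjlim_K H^0(M\setminus K)\to H^1_c(M)\to H^1_{\mathrm{dR}}(M)$. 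Both routes are sound; yours is self-contained but consumes the finite-ends statement, whereas the cited injectivity needs only the Sobolev inequality. Note also that your argument is one step away from recovering Carron's injectivity: once you have the smooth primitive $f\in L^{2n/(n-2)}(M)$ with $df=\omega$, $f$ is locally constant outside the (compact) support of $\omega$, and since each end has infinite volume (as used in Proposition \ref{ends are nonparabolic}) the constants on the unbounded components must vanish, so $f$ is compactly supported and $[\omega]=0$ already in $H^1_c(M;\mathbb{R})$; this would make the appeal to the end-counting exact sequence unnecessary. (Minor scope remark: like the paper's proof, your argument is valid in the dimension range where Theorem \ref{finiteend} applies, which covers the cases $3\le n\le 5$ actually used later.)
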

\begin{proof}
    According to Proposition \ref{sobolevineq} and \cite[proposition 2.11]{Carron-L2-harmonic}, we know that 
    \[
    H_{c}^{1}(M) \longrightarrow H^{1}(L^{2}(M))
    \]
    is injective.
    Combining Theorem \ref{finiteend}, we can obtain the conclusion.
\end{proof}

    We also need the fact that $M$ has $\text{Ric}\geq -(n-1)Cg$ for a positive constant $C$. Actually, we can show that the second fundamental form of $M$ is uniformly bounded from above. Then the lower bound for Ricci curvature follows from the Gauss equation. The estimate on $|A|$ can be obtained by a standard point-picking argument, references include \cite{brianwhitelecturenote, bellettini-chodosh-neshan}.
    \begin{proposition}\label{ric estimate below}
        Let $ E_0$ be a stable end of $M^n$ $(n\leq 5)$, then for any compact subset $K \subset E_0$, 
    \begin{equation}\label{A_estimate}
    \sup_{K}\sup_{x\in K} |A_K|(x)\min\{1,\operatorname{dist}_K(x,\partial K)\}\leq C
    \end{equation}
    for  a constant $C$ depending only on the mean curvature. In particular, $|A_{E_0}|$ is uniformly bounded from above.
    \end{proposition}
    \begin{proof}
    If the assertion fails, we can find a sequence of  compact two-sided stable CMC immersions $K_i\rightarrow \mathbb{R}^{n+1}$ such that \[\sup_{x\in K_i} |A_{K_i}|(x)\min\{1,\operatorname{dist}_{K_i}(x,\partial K_i)\}\rightarrow \infty\]
    as $i$ tends to infinity. We can choose the point $q_i$ such that
    \[|A_{K_i}|(q_i)\min\{1,\operatorname{dist}_{K_i}(q_i,\partial K_i)\}\rightarrow \infty.\] Then, we have 
    \[|A_{K_i}|(q_i)\rightarrow \infty
    \quad \text{and}\quad |A_{K_i}|(q_i)\operatorname{dist}_{K_i}(q_i,\partial K_i)\rightarrow \infty. 
    \]
    We denoted $r_{i} = \min\{ |A_{K_{i}}|^{-\frac{1}{2}}(q_{i}),\operatorname{dist}_{K_{i}}(q_{i},\partial K_{i}) \}\rightarrow 0$.
    By the definition, we have $r_{i}\leq \operatorname{dist}_{K_{i}}(q_{i},\partial K_{i})$. Thus, $B_{r_{i}}(q_{i})\subset K_{i}$.
    Since $B_{r_{i}}(q_{i})$ is a compact set, then we can choose $p_{i}\in B_{r_{i}}(q_{i})$ such that 
    \[
    |A_{K_{i}}|(p_{i})\operatorname{dist}_{K_{i}}(p_{i},\partial B_{r_{i}}(q_{i})) = \max_{x\in B_{r_{i}}(q_{i})}|A_{K_{i}}|(x)\operatorname{dist}_{K_{i}}(x,\partial B_{r_{i}}(q_{i})).
    \]
    Then we have
    \begin{equation}\label{p_{i}max}
    \begin{aligned}
    |A_{K_{i}}|(p_{i})\operatorname{dist}_{K_{i}}(p_{i},\partial B_{r_{i}}(q_{i}))
    & \geq |A_{K_{i}}|(q_{i})\operatorname{dist}_{K_{i}}(q_{i},\partial B_{r_{i}}(q_{i}))\\
    &\geq |A_{K_{i}}|(q_{i}) r_{i}\\
    &\geq \min\{ |A_{K_{i}}|^{\frac{1}{2}}(q_{i}),|A_{K_{i}}|(q_{i})\operatorname{dist}_{K_{i}}(q_{i},\partial K_{i} \}\\
    &\rightarrow \infty.
    \end{aligned}
    \end{equation}
    We also verify that $|A_{K_{i}}|(p_{i})\rightarrow \infty$. Indeed, $\operatorname{dist}_{K_{i}}(p_{i},\partial B_{r_{i}}(q_{i}))\leq r_{i}\leq |A_{K_{i}}|^{-\frac{1}{2}}(q_{i})\rightarrow 0$. Thus, by\eqref{p_{i}max} we have $|A_{K_{i}}|(p_{i})\rightarrow \infty$. 
    Now we denote $ R_{i}:=\text{dist}_{{K}_{i}}(p_{i},\partial B_{r_{i}}(q_{i}))>0$.
    We claim that
    \begin{equation}\label{claim1}
    |A_{{K}_{i}}|(p_{i})\text{dist}_{{K}_{i}}(p_{i},\partial B_{R_{i}}(p_{i})) = \max_{x\in B_{R_{i}}(p_{i})} |A_{{K}_{i}}|(x)\text{dist}_{{K}_{i}}(x,\partial B_{R_{i}}(p_{i})).
    \end{equation}
    If the claim fails, then $\exists\ t_{i}\in B_{R_{i}}(p_{i})$ such that 
    \[
        |A_{{K}_{i}}|(t_{i})\text{dist}_{{K}_{i}}(t_{i},\partial B_{R_{i}}(p_{i})) >  |A_{{K}_{i}}|(p_{i})\text{dist}_{{K}_{i}}(p_{i},\partial B_{R_{i}}(p_{i})).
    \]
    Then we have
    \begin{equation*}
    \begin{aligned}
            |A_{{K}_{i}}|(t_{i})\text{dist}_{{K}_{i}}(t_{i},\partial B_{R_{i}}(p_{i})) 
            &> |A_{{K}_{i}}|(p
            _{i})\text{dist}_{{K}_{i}}(p_{i},\partial B_{R_{i}}(p_{i}))\\
            &= |A_{{K}_{i}}|(p_{i})\text{dist}_{{K}_{i}}(p_{i},\partial B_{r_{i}}(q_{i}))\\
            &\geq |A_{{K}_{i}}|(t_{i})\text{dist}_{{K}_{i}}(t_{i},\partial B_{r_{i}}(q_{i})).        
    \end{aligned}
    \end{equation*}
    Thus, we have
    \[
    \text{dist}_{{K}_{i}}(t_{i},\partial B_{R_{i}}(p_{i})) > \text{dist}_{{K}_{i}}(t_{i},\partial B_{r_{i}}(q_{i})).
    \] This is a contradiction to the inclusion of the two balls.

    Then we move $K_i$ such that $p_i$ is the origin and scale $K_i$ by $\lambda_{i}:=|A_{K_i}|(p_i)$ to obtain new immersions, denoted by $\tilde{K}_{i}$, i.e. 
    \[
        \tilde{K_{i}} = \lambda_{i}(K_{i}-p_{i}).
    \]
    Now we can locally estimate the norm of the second fundamental form of $\tilde{K}_{i}$. Actually, for any fix $s>0$ and $\forall\ x\in B^{\tilde{K}_{i}}_{s}(0)$, we have
    \[
    \begin{aligned}
        |A_{\tilde{K}_{i}}|(x)
        &\leq |A_{\tilde{K}_{i}}|(0)\frac{\operatorname{dist}_{\tilde{K}_{i}}(0,\partial B^{\tilde{K}_{i}}_{\lambda_{i}R_{i}}(0))}{\operatorname{dist}_{\tilde{K}_{i}}(x,\partial B^{\tilde{K}_{i}}_{\lambda_{i}R_{i}}(0))}\\
        &\leq \frac{\lambda_{i}R_{i}}{\lambda_{i}R_{i}-s}\\
        &\leq C
    \end{aligned}
    \]
    for $\lambda_{i}R_{i}$ large enough.
    In above inequality, the first inequality is due to \eqref{claim1} and $|A_{K_{i}}|(p_{i})\operatorname{dist}_{K_{i}}(p_{i},\partial B_{r_{i}}(q_{i}))$ is scaling invariant. The finial inequality is because of $\lambda_{i}R_{i}\rightarrow \infty$ from \eqref{p_{i}max}.
    
    Since $|A_{\tilde{K}_i}|$ is locally bounded and using higher order elliptic estimates, up to a subsequence, we will obtain a limiting complete noncompact hypersurface $M_{\infty}$ that is stable in $\mathbb{R}^{n+1}$. Since $H_{\tilde{K}_{i}}=\frac{H}{|A_{K_{i}}|(p_{i})}\rightarrow 0$, $M_{\infty}$ is a minimal hypersurface in $\mathbb{R}^{n+1}$. By stable Bernstein theorem $(n\leq 5)$, we know that $M_\infty$ is a hyperplane. This contradicts to the fact that $|A_{M_{\infty}}|(0)=1.$ 
    Thus we prove\eqref{A_estimate}.  
    
    In particular, 
    \[
    |A_{E_{0}}|\leq \max\{\sup_{\text{dist}(x,\partial E_{0})\leq 1}|A|(x),C\}.
    \] Thus, $|A_{E_0}|$ is uniformly bounded from above.
    \end{proof}
    
Now we are ready to show the proof of the theorem \ref{maintheorem}.
\begin{proof}[Proof of Theorem \ref{maintheorem}]
    Assume that $p=0\in M$. Since $M$ is a CMC hpyersurface of finite index, there exists $ R_{0}>0$ such that $M\backslash B_{R_0}(p)$ is a stable CMC hypersurface. It follows from Theorem \ref{finiteend} that the number of ends of $M$ is finite, then choose $\rho>R_0$ large enough such that $M\setminus B_{\rho}(p)$ has $k$ connected unbounded components. Denote the unbounded connected component of $M\backslash B_{\rho}(p)$ by $\bigcup_{i=1}^k E_{i}$. For each $i,$ by Proposition \ref{bubble}, we have a domain $\Omega_{i}$ satisfying $\partial E_{i}\subset \Omega_{i}\subset B_{C(a_{n},\delta_{n})}(\partial E_{i})$. We denote $E_{i}\backslash \Omega_{i} = \tilde{E}_{i}\cup \tilde{S}_{i}$, where $\tilde{E}_{i}$ is the unbounded connected component of $E_i\setminus \Omega_i$ and $\tilde{S}_{i}$ is the union of bounded components of $E_i\setminus \Omega_i$. Finally, we denote $\partial \tilde{E}_{i}=\cup_j \Sigma_{ij}$, 
    where each $\Sigma_{ij}$ is a closed  hypersurface and the index $j$ denotes the number of components. Notice that no a prior control on $j$ is known. On each $\Sigma_{ij}$, there holds
    \begin{equation}\label{inequalityonbubble}
        \frac{4}{4-a_{n}}\int_{\Sigma_{ij}}|\nabla_{\Sigma_{ij}} \varphi|^2 \geq \int_{\Sigma_{ij}}(\frac{\delta_{n}}{2}-\alpha_{n} \operatorname{Ric}_{\Sigma_{ij}})\varphi^2. 
    \end{equation}
    
    We construct $\Omega_{\rho} = B_{\rho}(p)\cup(\cup_i\Omega_{i})\cup S \cup (\cup_j\tilde{S}_{j} )$, where $S$ is the set that contains all bounded connected components of $M\backslash B_{\rho}(p)$. Then we have that $\partial \Omega_{\rho} = \cup_{i,j} \Sigma_{ij}$ and $\{\Omega_{\rho}\}_{\rho}$ is an exhaustion sequence of $M$. Moreover, all the components of $M\setminus \Omega_\rho$ are unbounded. Thus, it follows from Proposition \ref{finite_H_1} and Lemma \ref{boundary} that
    \[
        \#\{ \Sigma_{ij}\} = k
    \]
    for $\rho$ large enough. That is, we can denote $\{\Sigma_{ij}\}=\{\Sigma_1,\cdots,\Sigma_k\}.$
    
    Now we would like to obtain a uniform area upper bound for $\Sigma_{i}$.
    
    \textbf{Case 1:} $n=5$, $\frac{4}{(4-a_{n})\alpha_{n}} = \frac{40000}{26877}<\frac{3}{2}=\frac{n-2}{n-3}$.
    
    \textbf{Case 2:} $n=4$, $\frac{4}{(4-a_{n})\alpha_{n}} = \frac{40}{29}<\frac{2}{1}=\frac{n-2}{n-3}$.
    
    In both cases, above inequalities and \eqref{inequalityonbubble} allow us to apply the area estimate of Antonelli-Xu\cite[Theorem 1]{antonelli-xu} under the metric $g$ and obtain
    \[
        |\Sigma_{i}|_{g} \leq C\
    \]
    for a universal constant $C$.
    
    \textbf{Case 3:} $n=3$, $a_{n}=1$, $\alpha_{n}=1$. In this case, we can directly apply \cite[lemma 6.1]{chodoshliR4anisotropic} such that
   \[
        |\Sigma_{i}|_{g} \leq C\
    \]
    for a universal constant $C$.

    Now we consider a decomposition as $\tilde{E}_i\backslash B_{C(a_{n},\delta_{n})}(\partial E_{i}) = \hat{E}_{i}\cup \hat{S}_{i}$ where $\hat{E}_{i}$ is the unbounded connected component of $\tilde{E}_i\backslash B_{C(a_{n},\delta_{n})}(\partial E_{i})$ and $\hat{S}_{i}$ is the union of bounded components of $\tilde{E}_i\backslash B_{C(a_{n},\delta_{n})}(\partial E_{i}) $.
    We construct $\hat{\Omega}_{\rho} = B_{\rho+C(a,\delta)}(p) \cup S\cup(\cup_{i} \tilde{S}_{i})\cup(\cup_{j} \hat{S}_{j}) $. Then $M\setminus \hat{\Omega}_\rho = \cup_{i}\hat{E}_{i}$.
    We denote $\hat\Omega_{i} =\{x\in \hat{E}_{i}\ |\ \text{dist}_{g}(x,\partial{\hat{E}_{i}})\leq C(a_{n},\delta_{n}) \} $. 
    Take a cut off function $f_{\hat\Omega_{\rho}}=\eta(\frac{1}{C(a_{n},\delta_{n})}d(x,\hat{\Omega}_{\rho}))$, where 
    \[\eta=\begin{cases}
        1-x, & x\in[0,1]\\
        0, & x\in (1,+\infty).
    \end{cases}\]
    Plugging $f_{\hat\Omega_{\rho}}$ in Proposition \ref{sobolevineq} yields 
    \begin{equation}\label{volume inequality}
         |\hat{\Omega}_{\rho}|^{\frac{n-2}{n}}\leq C(n,\theta,a_{n},\delta_{n})\sum_{i}|\hat\Omega_{i}|.
    \end{equation}
    
    In what follows we shall use the volume comparison theorem of hypersurfaces (see \cite{Wei_Xu_Zhang}[lemma 5.3] for instance) to control the volume of $\hat{\Omega}_{i}$. 
    Firstly, for fixed $i\in\{1,\cdots,k\}$, we show that $\Sigma_{i}$ separates $\partial E_{i}$ and $\partial \hat E_{i}$. In fact, by the definition, $\Sigma_{i}$ separates  $E_{i}\backslash \hat{E}_{i}$ into two connected components. Thus, for any continuous curve $\gamma : [0,1]\rightarrow {E}_{i}\backslash \hat{E}_{i}$ with $\gamma(0)\in \partial E_{i}$, $\gamma(1)\in \partial \hat{E}_{i}$, we have $\gamma\cap \Sigma_{i} \neq \emptyset$.

    From the separating property of $\Sigma_{i}$, we know that, for any $s\geq \rho+C(a_{n},\delta_{n})$, 
    \begin{equation}\label{separating}
        \Theta_{i,s} := \frac{1}{s}\exp_{p}^{-1}((\partial B_{s}(p)\cap (\hat\Omega_{i}))\backslash \text{Cut}(p))\subset F_{i}(\Sigma_{i}\backslash\text{Cut}(p))
    \end{equation}
    where we define $F_{i}:\Sigma_{i}\backslash\text{Cut}(p)\rightarrow \mathbb{S}_{p}(M)$ as $F_{i}(q) = \frac{\exp_{p}^{-1}(q)}{|\exp_{p}^{-1}(q)|_{g(p)}}$ and $\mathbb{S}_p(M)$ is the unit tangent space of $M$ at $p$.

    We denote $r:F_{i}(\Sigma_{i}\backslash\text{Cut}(p))\rightarrow \mathbb{R}^{+}$ such that $\varphi(\theta)=\exp_{p}(r(\theta)\theta)$ is the first time to touch $\Sigma_{i}$. Thus, $J_{\varphi}(\theta)$ as the jacobian determinant of $\varphi(\theta)$ is the volume form of $\Sigma_{i}$. We also denote by $J(s,\theta)d\theta$ the volume form of $\partial B_{s}(p)$ and the geodesic sphere with radius $s$ in space form $R^{n}(-C)$ is denoted by $\partial B^{-C}(s)$.  
    Then for any $\rho + C(a_{n},\delta_{n})\leq s\leq \rho +  2C(a_{n},\delta_{n})$, we have
    \begin{equation}\label{estimate of volume form}
    J_{\varphi}(\theta) \geq J(r(\theta),\theta) \geq \frac{|\partial B^{-C}(r(\theta))|}{|\partial B^{-C}(s)|}J(s,\theta)\geq\frac{|\partial B^{-C}(\rho)|}{|\partial B^{-C}(s)|}J(s,\theta) .
    \end{equation}
    In the above, the first inequality is due to \cite{Wei_Xu_Zhang}[lemma 5.2]. 
    The second inequality is a consequence of the Bishop-Gromov volume comparison theorem, combined with Proposition \ref{ric estimate below} (see the preceding discussion) and \eqref{separating}. The final inequality holds because  $\Sigma_{i}\subset E_{i}\backslash \hat{E}_{i}$.
    
    Then, ignoring the measure zero set in the integral, we have
    \begin{equation}
    \begin{aligned}
    |\Sigma_{i}| 
    &\geq \frac{|\partial B^{-C}(\rho)|}{|\partial B^{-C}(s)|}\int_{\Theta_{i,s}}J(s,\theta)d\mathcal{H}^{n-1}(\theta)\\
    &= \frac{|\partial B^{-C}(\rho)|}{|\partial B^{-C}(s)|} |(\partial B_{s}(p)\cap \hat{\Omega}_{i})-\text{Cut}(p)|.    
    \end{aligned}
    \end{equation}
    
    According to the coarea formula and $|\partial B^{-C}(r)| \approx e^{(n-1)r} $, we get
    \begin{equation}\label{volume estimate of hat omega}
    \begin{aligned}
    |\hat{\Omega}_{i}| 
    &= \int_{\rho+C(a_{n},\delta_{n})}^{\rho+2C(a_{n},\delta_{n})}|(\partial B_{s}(p)\cap \hat{\Omega}_{i})-\text{Cut}(p)| ds 
    \\
    &\leq \int_{\rho+C(a_{n},\delta_{n})}^{\rho+2C(a_{n},\delta_{n})}C\frac{e^{(n-1)s}}{e^{(n-1)\rho}}|\Sigma_{i}|ds\\
    &=C(e^{(n-1)2C(a_{n},\delta_{n})}-e^{(n-1)C(a_{n},\delta_{n})})|\Sigma_{i}|\\
    &\leq C(a_{n},\delta_{n},n).
    \end{aligned}
    \end{equation}

    Then it follows from \eqref{volume inequality} and \eqref{volume estimate of hat omega} that
    \begin{equation}
    \begin{aligned}
    |B_{\rho}(p)|
    &\leq|B_{\rho}(p)\cup S|\\
    &\leq |\hat\Omega_{\rho}|\\
    &\leq C \left(\sum_{i}|\hat\Omega_{i}|\right)^{\frac{n}{n-2}}\\
    &\leq k^{\frac{n}{n-2}}C.
    \end{aligned}
    \end{equation}
    The upper bound is independent of $\rho$. It means that $M$ has finite volume. This is impossible since $M$ is a noncompact complete CMC hypersurface in Euclidean space that has infinite volume. 
\end{proof}

Let us conclude this section by discussing extensions of results to $\delta$-stable CMC hypersurfaces in $\mathbb{R}^6$. The method used in this paper can also be applied to prove results for $\delta$-stable CMC hypersurfaces. A CMC hypersurface is said to be $\delta$-stable if it satisfies
\[
\int_{M} \delta |A|^2 f^2 \leq \int_{M} |\nabla f|^2
\]
for any $f \in C_0^\infty(M)$ and $\delta \leq 1$. We claim that for $0.84 \leq \delta \leq 1$, complete noncompact $\delta$-stable constant mean curvature hypersurfaces in $\mathbb{R}^6$ must be minimal. The only change required in the proof is to replace the matrix $A$ by $\tilde{A} = A + (\delta - 1)a I$ in \eqref{matrix_A}. It is not difficult to see that $\tilde{A}$ increases with $\delta$. Therefore, we only need to verify that for $\delta = 0.84$, we can choose $a = 1.149$, $\alpha = 0.936$, and a sufficiently small $\beta$ to satisfy all the conditions. Let us remark that in \cite{hong-li-wang}, we studied $\delta$-stable minimal hypersurfaces in $\mathbb{R}^6$. For $0.811 \leq \delta \leq 1$, we established the Euclidean volume growth of $\delta$-stable minimal hypersurfaces in $\mathbb{R}^6$ (see \cite[Remark 1.11]{hong-li-wang}). Furthermore, when $\delta > 15/16 \approx 0.9375$, it is proved that $\delta$-stable minimal hypersurfaces must be hyperplanes (see \cite[Theorem 1.7]{hong-li-wang}). Therefore, we can conclude that noncompact $\delta$-stable ($\delta \in (15/16, 1]$) CMC hypersurfaces must also be hyperplanes.

\section{Appendix}\label{section5}

We state the proposition regarding the extension Sobolev inequality of Carron. The proof is basically same as the proof of \cite[Proposition 2.4]{carron_L^2_cohomologie}. For the sake of convenience of readers, we include it here.
\begin{proposition}\cite[Proposition 2.5]{carron_L^2_cohomologie}\label{carron-proposition}
    Suppose $(M,g)$ is a complete noncompact Riemannian manifold with infinite volume. If there is a compact set $K\subset M$ satisfying the following sobolev inequality
    \begin{equation*}
        \Big(\int_{M\backslash K} |u|^{\frac{2p}{p-2}}dx \Big)^{\frac{p-2}{p}}\leq 
        C\int_{M\backslash K} |du|^2dx
    \end{equation*}
    for any $u\in C_{0}^{\infty}(M\backslash K)$, then $M$ also satisfies sobolev inequality
    \begin{equation*}
        \Big(\int_{M} |u|^{\frac{2p}{p-2}}dx \Big)^{\frac{p-2}{p}}\leq 
        C(M)\int_{M} |du|^2dx 
    \end{equation*}
    for any $u\in C_{0}^{\infty}(M)$.
\end{proposition}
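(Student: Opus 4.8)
The plan is to adapt Carron's localization argument.

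\textbf{Step 1 (localization).} Fix a cutoff $\chi\in C_0^\infty(M)$ with $0\le\chi\le 1$ and $\chi\equiv 1$ on an open neighborhood of $K$, set $U=\supp\chi$ (a compact set), and fix a relatively compact open set $\Omega$ with $U\subset\Omega$. Given $u\in C_0^\infty(M)$, write $u=\chi u+(1-\chi)u$. The function $(1-\chi)u$ has compact support and vanishes near $K$, so it lies in $C_0^\infty(M\setminus K)$; applying the hypothesis and using $|d((1-\chi)u)|\le |du|+|d\chi|\,|u|$ together with $\supp d\chi\subset U\subset\Omega$, one obtains
\[
\big\|(1-\chi)u\big\|_{L^{2p/(p-2)}(M)}\le C_1\Big(\|du\|_{L^2(M)}+\|u\|_{L^2(\Omega)}\Big).
\]
On the other hand $\chi u\in C_0^\infty(\Omega)$, and since $\overline\Omega$ is compact the standard Sobolev inequality on the relatively compact domain $\Omega$ (Euclidean Sobolev read off in coordinate charts via a partition of unity) gives $\|\chi u\|_{L^{2p/(p-2)}}\le C_2\|d(\chi u)\|_{L^2}\le C_3(\|du\|_{L^2(M)}+\|u\|_{L^2(\Omega)})$. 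Adding these via the triangle inequality in $L^{2p/(p-2)}$ yields
\[
\|u\|_{L^{2p/(p-2)}(M)}\le C_0\big(\|du\|_{L^2(M)}+\|u\|_{L^2(\Omega)}\big)\qquad\text{for all }u\in C_0^\infty(M).
\]

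\textbf{Step 2 (absorbing the $L^2$ term by compactness).} Suppose the asserted inequality fails. Then there is a sequence $u_k\in C_0^\infty(M)$ with $\|u_k\|_{L^{2p/(p-2)}(M)}=1$ and $\|du_k\|_{L^2(M)}\to 0$. By Step 1 this forces $\liminf_k\|u_k\|_{L^2(\Omega)}\ge 1/C_0>0$. On any relatively compact open set, Hölder's inequality bounds $\|u_k\|_{L^2}$ by a constant times $\|u_k\|_{L^{2p/(p-2)}}\le 1$, so $(u_k)$ is bounded in $W^{1,2}$ there; Rellich's theorem together with a diagonal argument over an exhaustion of $M$ produces a subsequence converging in $L^2_{\mathrm{loc}}(M)$ and a.e.\ to some $u_\infty\in L^2_{\mathrm{loc}}(M)$. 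Since $du_k\to 0$ in $L^2_{\mathrm{loc}}$, the limit satisfies $du_\infty=0$ weakly, hence $u_\infty$ is locally constant, and as $M$ is connected $u_\infty\equiv c_0$ for a constant $c_0$. From $\|u_\infty\|_{L^2(\Omega)}=\lim_k\|u_k\|_{L^2(\Omega)}\ge 1/C_0$ we get $c_0\ne 0$. But Fatou's lemma gives $\int_M|u_\infty|^{2p/(p-2)}\le\liminf_k\int_M|u_k|^{2p/(p-2)}=1<\infty$, which is impossible since $c_0\ne 0$ and $\Vol(M)=\infty$. This contradiction yields the global Sobolev inequality with some constant $C(M)$.

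\textbf{Main obstacle.} The crux is Step 2: on the noncompact manifold $M$ there is no pointwise-in-$u$ way to dominate $\|u\|_{L^2(\Omega)}$ by $\|du\|_{L^2(M)}$, so this term has to be absorbed by a soft compactness/contradiction argument, and the resulting $C(M)$ is not explicit. The hypothesis $\Vol(M)=\infty$ enters exactly here, to rule out the nonzero constant $u_\infty$ extracted in the limit; without it (e.g.\ if $M$ had finite volume) the statement is false, since constants would be admissible competitors. The remaining ingredients—the chartwise Sobolev estimate on $\overline\Omega$, and the diagonal $W^{1,2}$-to-$L^2_{\mathrm{loc}}$ extraction—are routine, and in the applications above $p=n\ge 3$ so $2p/(p-2)$ is the usual critical exponent.
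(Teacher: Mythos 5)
Your proof is correct, but it resolves the crucial last step by a genuinely different mechanism than the paper. Both arguments share your Step 1: cut off near $K$, apply the hypothesis to $(1-\chi)u$, use a local Sobolev inequality on a relatively compact neighborhood of $K$, and arrive at $\|u\|_{L^{2p/(p-2)}(M)}\le C\big(\|du\|_{L^2(M)}+\ell(u)\big)$ with a compactly supported lower-order term $\ell(u)$. The paper takes $\ell(u)=\big|\int_{\tilde K}u\big|$ (after a Poincar\'e inequality on $\tilde K$) and removes it by potential theory: the exterior Sobolev inequality together with infinite volume makes $M$ nonparabolic (Li's lemma), and then the Ancona/Carron estimate gives $\big|\int_{\tilde K}u\big|\le C\big(\int_M|du|^2\big)^{1/2}$; this is quantitative but relies on nontrivial Green's-function/capacity input. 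You instead keep $\ell(u)=\|u\|_{L^2(\Omega)}$ and absorb it by a soft contradiction argument (normalization, Rellich over an exhaustion, the limit is a constant, Fatou plus $\Vol(M)=\infty$ rules it out), which is more elementary and self-contained but produces a non-explicit constant $C(M)$, as you note. Two small caveats: your local Sobolev inequality for $\chi u$ on $\Omega$ with exponent $2p/(p-2)$ requires $2p/(p-2)\le \frac{2n}{n-2}$, i.e. $p\ge n=\dim M$, but the paper's inequality on $\tilde K$ needs exactly the same restriction and the application has $p=n$, so this is not a defect relative to the paper; and you should say explicitly that $M$ is connected so that the locally constant limit $u_\infty$ is a single constant.
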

\begin{proof}
    Take a compact set $\tilde{K}$ containing $K$ and a cut-off function $\theta$ satisfies $\theta = 1$ on $K$ and $\theta = 0$ outside $\tilde{K}$. For any $u\in C_{0}^{\infty}(M)$, then $(1-\theta)u\in C_{0}^{\infty}(M\backslash K)$. Hence
    \begin{equation}\label{ineq1}
    \begin{aligned}
        ||(1-\theta)u||_{L^{\frac{2p}{p-2}}(M\backslash K)}
        &\leq C_{1} ||d((1-\theta)u)||_{L^{2}(M\backslash K)}\\
        &\leq C_{1}(||d\theta||_{L^{\infty}(\tilde{K})}||u||_{L^2(\tilde{K})}+||du||_{L^2(M)}).
    \end{aligned}
    \end{equation}
    We also notice Sobolev inequality on $\tilde{K}$
    \begin{equation}\label{ineq2}
        ||u-\frac{\int_{\tilde{K}}u\ dx}{Vol(\tilde{K})}||_{L^{\frac{2p}{p-2}}(\tilde{K})} \leq C_{2}||du||_{L^2(\tilde{K})}
    \end{equation}
    and the Poincar\'e inequality on $\tilde{K}$ implies
    \begin{equation}\label{ineq3}
    \begin{aligned}
        ||u||_{L^2(\tilde{K})} \leq C_{3}||du||_{L^2(\tilde{K})} + \frac{\left|\int_{\tilde{K}}u\ dx\right|}{(Vol(\tilde{K}))^{\frac{1}{2}}}.
    \end{aligned}    
    \end{equation}
    Combine \eqref{ineq1}, \eqref{ineq2} and \eqref{ineq3}, we have
    \begin{equation}
    \begin{aligned}
        &||u||_{L^{\frac{2p}{p-2}}(M)}\\ 
        &\leq ||(1-\theta)u||_{L^\frac{2p}{p-2}(M)} + ||\theta (u-\frac{\int_{\tilde{K}}u\ dx}{Vol(\tilde{K})})||_{L^\frac{2p}{p-2}(M)} + 
        ||\theta \frac{\int_{\tilde{K}}u\ dx}{Vol(\tilde{K})}||_{L^\frac{2p}{p-2}(M)}\\
        &\leq ||(1-\theta)u||_{L^\frac{2p}{p-2}(M\backslash K)} + ||(u-\frac{\int_{\tilde{K}}u\ dx}{Vol(\tilde{K})})||_{L^\frac{2p}{p-2}(\tilde{K})} + 
        ||\frac{\int_{\tilde{K}}u\ dx}{Vol(\tilde{K})}||_{L^\frac{2p}{p-2}(\tilde{K})}\\
        &\leq C_{4}(||du||_{L^2(M)}+|\int_{\tilde{K}}u \ dx|).
    \end{aligned}
    \end{equation}
    It follows from \cite[Lemma 3.10]{li-harmonic} that $M$ is nonparabolic. According to \cite{Ancona} and \cite{carron_L^2_cohomologie}, we have
    \begin{equation}
        \left| \int_{\tilde{K}}u\ dx \right| \leq C_{5}\Big(\int_{M}|du|^2\ dx\Big)^{\frac{1}{2}}.
    \end{equation}
    Finally, we obtain the conclusion.
\end{proof}

\bibliographystyle{alpha}

\bibliography{references}
\end{document}